\documentclass[11pt]{amsart}
\usepackage[T1]{fontenc}
\usepackage{lmodern}
\usepackage[margin=1.1in]{geometry}
\usepackage{amsmath,amssymb,amsthm,mathtools}
\usepackage{booktabs,microtype,cite}
\usepackage[hidelinks]{hyperref}

\newtheorem{theorem}{Theorem}[section]
\newtheorem{proposition}[theorem]{Proposition}
\newtheorem{lemma}[theorem]{Lemma}
\newtheorem{corollary}[theorem]{Corollary}
\theoremstyle{definition}
\newtheorem{example}[theorem]{Example}
\newtheorem*{question}{Question}
\theoremstyle{remark}
\newtheorem{remark}[theorem]{Remark}
\numberwithin{equation}{section}

\newcommand{\F}{\mathbb F}
\newcommand{\Z}{\mathbb Z}
\newcommand{\R}{\mathbb R}
\newcommand{\E}{\mathbb E}
\newcommand{\Lin}{\mathcal L}
\newcommand{\ind}{\mathbf 1}
\DeclareMathOperator{\wt}{wt}
\newcommand{\NL}{\mathcal{NL}}
\DeclareMathOperator{\im}{im}

\title{Linearity Bounds for APN Functions}
\thanks{\textsuperscript{*}Research and writing were to a large extent performed by AI. The detailed statement on the use of generative AI is given at the end of this article.}
\author[Christof Beierle]{Christof Beierle\textsuperscript{*}}
\address{Faculty of Computer Science, Ruhr University Bochum, Germany}
\email{christof.beierle@rub.de}
\date{}
\keywords{APN function, linearity, nonlinearity, plateaued function, Walsh transform, Sidon set}
\subjclass[2020]{Primary 06E30; Secondary 05B10, 11B13, 11T71}
\hypersetup{pdftitle={Linearity bounds for APN functions},
  pdfsubject={Linearity, plateaued components, and Sidon subsets of APN graphs}}

\begin{document}

\begin{abstract}
For $n\ge5$, let $F\colon\F_2^n\to\F_2^n$ be almost perfect nonlinear
and write $N=2^n$. It is proven that the linearity $\Lin(F)$ of $F$,
i.e., the largest absolute Walsh coefficient of a nonzero component,
is at most $N-10$ in even dimension and at most $N-6$ in odd dimension.
This improves the general upper bound of $N-6$ in even dimension and $N-4$ in odd dimension.

It is further proven that, for each fixed $k$,
the $k$-th largest absolute Walsh coefficient among nonzero components,
counted with multiplicity, is at most $(1+O_k(2^{-n}))N/\sqrt{k}$ as $n\to\infty$. The second and fourth largest coefficients
are at most $2\lfloor N/3\rfloor$ and $N/2$, respectively.

Finally, a bound on the linearity in terms of the number $q$ of
nonplateaued nonzero components is derived. 
In odd dimension, for $q>0$, we have
$\Lin(F)^2\le N(1+\sqrt{q(N-1)})$, so that $\Lin(F)/N\to1$ implies $q/N\to1$.
In even dimension, for every $1/2\le C<1$, the condition
$q\le(4C-C^2-1)N/4+1$ implies $\Lin(F)\le CN$. In particular,
$q\le3N/16+2$ implies $\Lin(F)\le N/2$.
\end{abstract}

\maketitle
\section{Introduction}

Let $n$ be a positive integer and write $N=2^n$.
A function $F\colon \F_2^n\to \F_2^n$ is
\emph{almost perfect nonlinear}, or \emph{APN}, if for every $h\ne0$
and every $y\in \F_2^n$ the equation
\[
F(x+h)+F(x) =y
\]
has at most two solutions $x \in \F_2^n$. Since the solutions occur in pairs
$\{x,x+h\}$, this is the smallest possible differential uniformity.
The term \emph{almost perfect nonlinear} goes back to Nyberg and
Knudsen~\cite{NybergKnudsen93}, where such functions were studied in the context of using them as building blocks in symmetric cryptographic primitives.
The APN condition has a combinatorial interpretation.
A subset of a binary vector space is \emph{Sidon} if no four distinct
elements have sum zero, or equivalently if the sums of unordered pairs
of distinct elements are all different~\cite{BabaiSos85}. The function graph
\[
\Gamma_F=\{(x,F(x)):x\in \F_2^n\}
\]
is Sidon if and only if $F$ is APN~\cite{CarletPicek23}. Indeed, two distinct pairs of
solutions to one derivative equation give four distinct graph points
with sum zero, and the converse follows by pairing four such points.

For $b\in \F_2^n$, the Boolean function $f_b=b\cdot F$ from $\F_2^n$ to $\F_2$ is a
\emph{component} of $F$, where $\cdot$ denotes the usual dot product over
$\F_2$. A \emph{nonzero component} will always mean a component
indexed by $b\ne0$. For a Boolean function $f\colon \F_2^n\to\F_2$, its
Walsh transform is
\[
W_f(a)=\sum_{x\in \F_2^n}(-1)^{f(x)+a\cdot x}\qquad(a\in \F_2^n).
\]
The \emph{linearity} of $F$ is
\[
\Lin(F)=\max_{\substack{a\in \F_2^n\\b\in \F_2^n\setminus\{0\}}}
              |W_{f_b}(a)|.
\]
It measures the largest intersection of the graph of $F$ with an affine
hyperplane:
\begin{equation}\label{eq:hyperplanes}
\max_{\mathcal H}|\Gamma_F\cap\mathcal H|
   =\frac{N+\Lin(F)}2.
\end{equation}
Here the maximum is over affine hyperplanes of $\F_2^n\times \F_2^n$.
Indeed, for $b\ne0$, the hyperplane defined by $a\cdot x+b\cdot y=c$ contains
$(N+(-1)^cW_{f_b}(a))/2$ graph points. A hyperplane with $b=0$
has $a\ne0$ and contains $N/2$ graph points. These relations between APN functions,
Sidon sets, and linearity are discussed in~\cite{CP26}.

The corresponding \emph{nonlinearity} is the minimum distance of a
nonzero component from an affine Boolean function:
\begin{equation*}
\NL(F)=\min_{\substack{b\ne0\\ \ell\text{ affine}}}
             d_H(f_b,\ell)
       =\frac{N-\Lin(F)}2.
\end{equation*}
Here $d_H$ denotes Hamming distance.
Indeed, $d_H(f_b,a\cdot x+c)=(N-(-1)^cW_{f_b}(a))/2$.
The APN condition and high nonlinearity are desirable in
cryptography because of their roles in differential and linear
cryptanalysis; see~\cite{CV94,CarletBook}. 

Except of some specially structured classes of APN functions, they are quite poorly understood in general. In particular, the maximum linearity attainable by APN functions remains unknown. For $n\ge3$, no APN function with linearity exceeding $N/2$ is known.
The classical general bound is $\Lin(F)<N$, i.e., an APN function in these
dimensions has no affine nonzero component~\cite[p.~424]{Carlet10}.
The restriction $n\ge3$ is essential since the map
$F(x_1,x_2)=(x_1x_2,0)$ on $\F_2^2$ is APN and has $\Lin(F)=4=N$.
Czerwinski and Pott~\cite[Corollary~3.7]{CP26} proved the general bound
\begin{equation}\label{eq:previous}
\Lin(F)\le N-4\qquad(n\ge3).
\end{equation}

The proof of~\eqref{eq:previous} uses an upper bound for the size
of Sidon sets in odd-dimensional binary vector spaces.
Recently, Thornburgh~\cite[Corollary~4.3]{Thornburgh26} proved
$\Lin(F)\le N-6$ for even $n\ge4$, using an improved bound
for Sidon sets in dimensions congruent to three modulo four.
The following theorem establishes the bound $N-6$ for odd
$n\ge5$ and strengthens it to $N-10$ for even $n\ge6$.

\begin{theorem}\label{thm:small}
Let $F\colon\F_2^n\to\F_2^n$ be APN with $n\ge5$. Then
\[
\Lin(F)\le
\begin{cases}
N-10,&n\text{ even},\\
N-6,&n\text{ odd}.
\end{cases}
\]
\end{theorem}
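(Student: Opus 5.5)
We reduce to Walsh spectrum identities for APN functions combined with divisibility. Suppose $W_{f_b}(a)=N-2t$ with $t$ small, possibly after complementing $f_b$ (replacing $c$ by $c+1$). By \eqref{eq:hyperplanes}, the graph meets the hyperplane $a\cdot x+b\cdot y=0$ in $N-t$ points and its complement in $t$ points. So $f_b(x)+a\cdot x$ is a Boolean function that is $0$ except on a set $T$ of size $t$.

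Since $\Lin(F)\equiv 0 \pmod 4$ for $n\ge 3$ (Walsh values of APN components are $\equiv 0\bmod 4$, because of the known divisibility of $W_{f_b}$ by $4$, or equivalently because $f_b$ has even weight and degree $<n$), $t$ is even. So the statement amounts to excluding $t=2$ in odd $n$, and $t\in\{2,4\}$ in even $n$. The case $t=0$ is the classical bound; $t=2$ would already be covered by \eqref{eq:previous} if $N-4$ is allowed there. Hence the real content is: odd $n$, exclude $t=2$ beyond the previous bound (which is in fact $\Lin\le N-4$, i.e.\ $t\ge 2$); and even $n$, exclude $t=2,4$.

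Composing $F$ with an invertible affine map on the output keeps $F$ APN. We use this to make $b$ a coordinate, say the last coordinate $F_n$, and we may add $a\cdot x$ to $F_n$ via the graph transformation $(x,y)\mapsto(x,y+ (a\cdot x)e_n)$, which preserves the Sidon property. So we may assume $F_n=\ind_T$ with $|T|=t$. Then for $h\ne0$, the derivative $D_hF$ has last coordinate $\ind_T(x)+\ind_T(x+h)$, which is zero except on $T\cup(T+h)$. Hence, for all $h$ with $h\notin T+T$, the map $G=(F_1,\dots,F_{n-1})$ satisfies $D_hG(x)=D_hG(x')$ for $x'\neq x,x+h$ only if the last coordinates also agree. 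This forces $G\colon\F_2^n\to\F_2^{n-1}$ to be ``almost APN'': restricted to $\F_2^n\setminus T$, the set $\{(x,G(x))\}$ is Sidon in $\F_2^{2n-1}$. That is, we obtain a Sidon set of size $N-t$ in dimension $2n-1$ (the paper's earlier route), which is compared against the Sidon bounds. Additionally, the $t$ points of $T$ carry the other last-coordinate value. So the full graph is a union of a Sidon set $S_0$ of size $N-t$ in the hyperplane and $t$ points outside, with the whole set still Sidon.

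The main step, which I expect to be the obstacle, is to exploit the interaction with the $t$ extra points. Sums $p+q$ with $p,q\in S_0$ must avoid sums $r+s$ with $r\in S_0$, $s\in T$-part, after translation. Concretely, for $s\in\Gamma_F\setminus S_0$, the translates $s+S_0$ lie in the complementary coset and must be disjoint from sums of pairs appropriately. Counting the pair sums $\binom{N-t}{2}+(N-t)t+\binom t2$, all distinct and nonzero, against $N^2-1$ gives nothing. Instead, I would use the refined Sidon-size bounds in dimension $2n-1$ quoted earlier (Czerwinski–Pott for odd dimension; Thornburgh's improvement for dimension $\equiv3\pmod 4$, which is the case $2n-1$ with $n$ even). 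I would strengthen them by a fourth-moment Walsh argument on $S_0$: $\sum_u \widehat{\ind_{S_0}}(u)^4$ equals $3|S_0|^2-2|S_0|$ plus corrections forced by the Sidon property. The extra points give a nonzero character mass on the hyperplane direction. Then I would check that $t=2$ (odd $n$) and $t=2,4$ (even $n$) make the resulting integer inequality fail for $n\ge5$. The delicate part is getting the extra constant savings. I would handle it by exploiting that $|S_0|\equiv N\pmod 4$ gives parity constraints on the Walsh values of $\ind_{S_0}$, tightening the fourth-moment estimate by a discrete rounding argument, with small cases $n=5,6$ checked directly.
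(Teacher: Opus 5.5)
Your reduction rests on the claim that every Walsh coefficient of an APN component is divisible by $4$, so that $t$ is even. This is not a known fact. Since $W_{f_b}(a)=N-2\wt(f_b+a\cdot x)$ and linear functions have even weight, $4\mid W_{f_b}(a)$ for all $a$ and all $b\ne0$ is equivalent to $\sum_xF(x)=0$. That in turn is equivalent to $F$ having algebraic degree at most $n-1$, which is an open conjecture for APN functions. The paper uses this parity only conditionally, in Corollary~\ref{cor:parity}, to reach $N-8$ and $N-12$. Without it, $t$ may be odd, and your case list for even $n$ drops $t=3$. The paper covers $|E|\le3$ by enlarging $E$ to four affinely independent points and proving that no map $\F_2^n\setminus E\to\F_2^{n-1}$ has a Sidon graph (Proposition~\ref{prop:weight-four}(1)).

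The more serious gap is that the step carrying all the content is never supplied. On their own, the available Sidon-size bounds for a set of size $N-t$ in $\F_2^{2n-1}$ yield exactly the earlier bounds $N-4$ and $N-6$. For a Sidon set, the fourth moment of the character sums is exactly $|G|(3|A|^2-2|A|)$, with no correction terms. So ``fourth moment plus rounding'' has nothing to act on unless you know the character sums modulo $4$ and how the second moment splits.

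The missing idea is to pin down $\sum_{z\in A}z$. For $E=\{0,d\}$ one has $D_dg=0$, so Lemma~\ref{lem:derivative}(1) gives $\sum_xH(x)=0$. After normalizing $H(0)=H(d)=0$, this gives $\sum_{z\in A}z=s=(d,0)$, which fixes $\alpha_\chi\bmod4$ according to the sign of $\chi(s)$. The Sidon property of the full graph, applied together with the two deleted points, shows that $s$ is not a sum of two distinct points of $A$. This splits the second moment equally between the classes $\chi(s)=\pm1$. The nonnegativity of $((T+a)^2-4)((T-a)^2-4)$ on $4\Z+2$, combined with these second moments and the fourth moment, then forces $\alpha_\chi^2=N$ whenever $\chi(s)=-1$ and $c\ne0$ (Lemma~\ref{lem:arithmetic}). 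This is impossible for odd $n$, and for even $n$ it is refuted by summing $\alpha_{\chi_{a,c}}$ over $c\ne0$.

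The case $|E|=4$ needs more structure still. First, $\alpha_\chi/4\bmod2$ is a quadratic function on $\langle\delta\rangle^\perp$. The weight classification of quadratic functions then forces it to be the indicator of a codimension-two subspace. Finally, a coset-sum parity argument produces more exceptional values than the moment bound allows. Separately, ``checking $n=5,6$ directly'' is not available, since APN functions in dimension six are not classified.
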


The proof excludes a nonzero component at distance two from an affine
function, and at distance at most four when $n\ge6$ is even.
Its small support determines congruences for the character sums of a
projected part of the function graph. Combining these congruences
with the second and fourth moments, together with an integrality argument, gives
the exclusions. If $\sum_xF(x)=0$, parity also gives
$\Lin(F)\le N-8$ in odd dimension and $\Lin(F)\le N-12$ in even
dimension at least six; this includes APN permutations
(Corollary~\ref{cor:parity}).

Stronger dimension-specific bounds are already available in some cases.
The Sidon-set bounds in~\cite[Table~2]{CP26}, together with
\eqref{eq:hyperplanes}, give $\Lin(F)\le8,16,52,122$ for
$n=4,5,6,7$, respectively; the classification
in~\cite{BrinkmannLeander08} improves the dimension-five bound to $12$.
The bound $8$ in dimension four is attained by the function
$x\mapsto x^3$ over $\F_{16}$. For bounds on the linearity of APN power functions,
see~\cite{Carlet18}.

The additive improvement in Theorem~\ref{thm:small} leaves open the following question.

\begin{question}
Does there exist an absolute constant $C<1$ such that
$\Lin(F)\le CN$ for every $n\ge4$ and every APN function
$F\colon\F_2^n\to\F_2^n$?
\end{question}

Equivalently, is $\NL(F)\ge\eta N$ for some absolute $\eta>0$?
Carlet~\cite[Corollary~3]{Carlet21} proved that
\[
\Lin(F)\le\bigl(N^4/2-N^3\bigr)^{1/4}<2^{-1/4}N
\]
when the maximum absolute Walsh coefficient is attained at two
distinct pairs $(a,b)$ with $b\ne0$. The second main result strengthens this conditional
bound by controlling the $k$-th largest coefficient without any
assumption on the multiplicity of the maximum. Its main strength lies in the control of the largest few absolute Walsh coefficients.

\begin{theorem}\label{thm:second}
Let $F\colon \F_2^n\to \F_2^n$ be APN with $n \geq 4$. Denote by $\Lin_k(F)$
the $k$-th entry when the $N(N-1)$ numbers
$|W_{f_b}(a)|$, indexed by $(a,b)\in \F_2^n\times(\F_2^n\setminus\{0\})$,
are arranged in nonincreasing order, with repeated values retained. Then,
for every $1\le k\le N(N-1)$, we have
\begin{equation}\label{eq:walsh-rank}
\Lin_k(F)\le\frac N{\sqrt{k}}
\sqrt{1+\frac{2^{k-1}-1}{N}}.
\end{equation}
Moreover,
\[
\Lin_2(F)\le2\left\lfloor\frac N3\right\rfloor<\frac{2N}{3},
\qquad
\Lin_4(F)\le\frac N2.
\]
\end{theorem}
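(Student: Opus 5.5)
The plan is to push the graph $\Gamma_F$ forward to a small quotient of $\F_2^n\times\F_2^n$ and apply Parseval there, using the Sidon property of $\Gamma_F$ to control the second moment of the cell sizes.

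\textbf{Setup.} Fix $k$ and choose $k$ distinct pairs $u_i=(a_i,b_i)$, $b_i\neq0$, realizing $|W_{f_{b_i}}(a_i)|\ge \Lin_k(F)=:L$. Regard $W_{f_b}(a)$ as the Fourier coefficient $\widehat{\ind_{\Gamma_F}}(a,b)$ of the indicator of the graph. Let $U=\langle u_1,\dots,u_k\rangle$, of dimension $d\le k$, and let $K=U^\perp$, a subspace of codimension $d$. The cosets of $K$ (the \emph{cells}) partition $\Gamma_F$ into $2^d$ parts of sizes $m_c$, with $\sum_c m_c=N$.

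\textbf{Step 1: Parseval on the quotient.} The Fourier coefficients of the pushforward measure $c\mapsto m_c$ are exactly the values $\widehat{\ind_{\Gamma_F}}(u)$ for $u\in U$. Hence
\[
\sum_{u\in U\setminus\{0\}} W(u)^2 \;=\; 2^d\sum_c m_c^2 - N^2 .
\]
Every $u\in U$ with $b$-part $0$ and $a$-part nonzero contributes $0$, which only helps.

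\textbf{Step 2: Sidon bound on collisions.} Two distinct graph points lie in the same cell exactly when their sum lies in $K\setminus\{0\}$. Because $\Gamma_F$ is Sidon, distinct unordered pairs have distinct sums. Moreover, these sums $(x+y,F(x)+F(y))$ never have first coordinate $0$. This gives
\[
\sum_c\binom{m_c}{2}\le\bigl|K\setminus(\{0\}\times\F_2^n)\bigr| .
\]
Combining this with Step 1 and keeping only the $k$ chosen terms on the left yields
\[
kL^2\le N^2+O(2^kN).
\]
This already gives $\Lin_k(F)\le (1+O_k(2^{-n}))N/\sqrt k$.

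\textbf{Step 3: sharpening the error term.} The stated error $(2^{k-1}-1)N$ needs more care. The crude count above produces roughly $(2^k-2)N$, so a factor two must be recovered. I would get it from a finer count of $K\setminus(\{0\}\times\F_2^n)$. I would also use that each difference direction $h=x+y\neq0$ occurs for exactly $N/2$ unordered pairs under the APN condition. This should show that the sums available in $K$ are only about half of $|K|$, roughly $N\cdot|\pi_1(K)\setminus\{0\}|/2$ where $\pi_1$ is projection to the first coordinate. The dependent case $d<k$ only makes the quotient smaller and is handled the same way. \emph{This factor-two refinement is the step I expect to be the main obstacle.}

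\textbf{Step 4: the two exact bounds.} For $\Lin_2$ and $\Lin_4$ I would work with the explicit cells for $k=2$ and $k=4$, and add integrality information. The $m_c$ are integers, each chosen Walsh value is a signed combination of the $m_c$ (for instance $W(u_1)=m_{00}+m_{01}-m_{10}-m_{11}$), and Walsh values of APN components are $\equiv N \pmod 4$.

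For $k=2$ the three nonzero vectors of $U$ are $u_1,u_2,u_1+u_2$. Optimizing the four integer cell sizes under the Sidon collision constraint should give $\Lin_2(F)\le 2\lfloor N/3\rfloor$: the extremal configuration splits $N$ into three nearly equal parts, which is where the floor comes from.

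For $k=4$ I would apply the same quotient inequality. With the refined Step 3 count, $4L^2\le N^2+7N$. Combined with the congruence of $W$ modulo $4$ (and, if necessary, a check of the borderline value $L=N/2+4$), this should force $L\le N/2$.
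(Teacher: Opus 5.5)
\textbf{Verdict.} Your Steps 1--2 are correct, and the $\Lin_2$ bound goes through, but Step~3 fails, and with it the general inequality and the $\Lin_4$ bound.

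\textbf{What works.} With $d=\dim U\le k$, Steps 1--2 give $\sum_{u\in U\setminus\{0\}}W(u)^2\le N^2+(2^d-2)N$. Hence $kL^2\le N^2+(2^k-2)N$, which gives the asymptotic form. For $k=2$ this is exactly the inequality $W(u_1)^2+W(u_2)^2+W(u_1+u_2)^2\le N^2+2N$ that the paper uses. Combined with $|W(u_1+u_2)|\ge|W(u_1)|+|W(u_2)|-N$ (which follows from $m_c\ge0$) and evenness of Walsh values, your cell-size optimization does yield $\Lin_2(F)\le2\lfloor N/3\rfloor$.

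\textbf{Why Step~3 fails.} Counting pair sums per difference direction cannot gain anything. Put $e=\dim\pi_2(U)$, which is at least $1$ since all $b_i\ne0$. Then $K\cap(\{0\}\times\F_2^n)=\{0\}\times\pi_2(U)^\perp$ has $N/2^e\le N/2$ elements, and every nonempty fibre $K\cap(\{h\}\times\F_2^n)$ is a coset of it. So the APN fact that exactly $N/2$ points of $\{h\}\times\F_2^n$ are pair sums is never binding. The refined count $\sum_{h\in\pi_1(K)\setminus\{0\}}\min(N/2,N/2^e)$ equals $|K\setminus(\{0\}\times\F_2^n)|$, which is your crude count. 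Your heuristic value $N|\pi_1(K)\setminus\{0\}|/2=2^{e-1}|K|-N/2$ is not about $|K|/2$; it is never smaller than the crude count. For a linear span, the paper's Lemma~\ref{lem:affine-walsh} likewise gives only $N^2+(2^d-2)N$.

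\textbf{The missing idea.} Use the \emph{affine} span of the chosen indices, which has dimension at most $k-1$.
If the $u_i$ are linearly dependent, then $d\le k-1$ and your bound already suffices.
If they are independent, all $u_i$ lie in the coset $A=U\setminus U_0$ of the hyperplane $U_0=\{\sum\lambda_iu_i:\sum\lambda_i=0\}$, and you should sum over $A$ only. In your language, each cell of $U_0^\perp$ is the union of two cells $c,c'$ of $K$. Parseval on $U$ and on $U_0$ gives
\[
\sum_{u\in A}W(u)^2=2^{d-1}\sum(m_c-m_{c'})^2\le\sum_{u\in U_0}W(u)^2,
\]
with the middle sum taken over these pairs. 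Hence
\[
\sum_{u\in A}W(u)^2\le\tfrac12\sum_{u\in U}W(u)^2\le N^2+(2^{d-1}-1)N.
\]
This halves the whole bound $2N^2+(2^d-2)N$, instead of merely removing $W(0)^2=N^2$ from it. It is the paper's Lemma~\ref{lem:affine-walsh}, proved there via $\sum_{u\in A}(-1)^{u\cdot s}\le2^r\ind_{\langle A\rangle^\perp}(s)$.

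\textbf{Consequence for $\Lin_4$.} Without this step your $\Lin_4$ argument also fails: $4L^2\le N^2+14N$ does not exclude $L=N/2+2$. The fallback congruence $W_{f_b}(a)\equiv N\pmod 4$ is not available either. It is equivalent to every component having even weight, i.e.\ to $\sum_xF(x)=0$, which is the open conjecture mentioned after Corollary~\ref{cor:parity}.
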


For every fixed $k$, the theorem gives
$\Lin_k(F)\le (1+O_k(2^{-n}))N/\sqrt{k}$ as $n\to\infty$.
The proof expands sums of squared Walsh coefficients over affine
subspaces of their indices. Character orthogonality shows that an
unordered pair of distinct points on the function graph contributes positively only
when its sum is orthogonal to every index in the subspace. The Sidon
property bounds the number of such pairs. A partial-sum bound follows
because $k$ indices have affine span of dimension at most $k-1$.

Finally, the third main result establishes an upper bound on the linearity in terms of the number of
nonplateaued components.
A Boolean function $f$ is \emph{plateaued}, in the terminology of
Zheng and Zhang~\cite{ZhengZhang99}, if some $\lambda>0$ satisfies
$W_f(a)\in\{0,\lambda,-\lambda\}$ for all $a$.
A vectorial function is plateaued if every  component is
plateaued; the magnitude $\lambda$ may depend on the component.
Affine Boolean functions are included in this definition.
Quadratic Boolean functions, i.e., functions with
$D_uD_vf$ being constant for all $u,v\in \F_2^n$, where $D_hf(x) = f(x+h) + f(x)$, are plateaued. For background and
combinatorial properties, see~\cite{CarletProuff03,Carlet15,CarletBook,KolschPolujan26}. For further restrictions on the Walsh spectra of quadratic APN
functions, see B\'en\'eteau et al.~\cite{BeneteauEtal26}.
Most known constructions of APN functions yield functions all of whose
nonzero components are plateaued~\cite{KolschPolujan26}.

Write
\[
q=\bigl|\{b\in \F_2^n\setminus\{0\}:b\cdot F
                       \text{ is not plateaued}\}\bigr|.
\]
The case $q=0$ is classical: Since the nonzero Walsh magnitude of
a plateaued Boolean function is a power of two, the exclusion of
affine APN components gives $\Lin(F)\le N/2$ when $n\ge3$.
In odd dimension, a plateaued APN function is almost bent, i.e.,
every nonzero component has Walsh coefficients in
$\{0,\pm\sqrt{2N}\}$, and hence $\Lin(F)=\sqrt{2N}$
(see, e.g., \cite{BergerEtal06,Canteaut01}).

\begin{theorem}\label{thm:count}
Let $F\colon\F_2^n\to\F_2^n$ be APN with $n \geq 4$, and let $q$ be its number of nonplateaued
nonzero components.
\begin{enumerate}
\item If $n$ is odd and $q>0$, then
\begin{equation}\label{eq:odd-count}
\Lin(F)^2\le N\bigl(1+\sqrt{q(N-1)}\bigr).
\end{equation}
\item If $n$ is even, then, for every $1/2\le C<1$, we have
\begin{equation*}
q\le\frac{4C-C^2-1}{4}\,N+1
\quad\Longrightarrow\quad
\Lin(F)\le CN.
\end{equation*}
\end{enumerate}
\end{theorem}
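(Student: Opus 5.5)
The plan is to work with the autocorrelations $C_b(h)=\sum_x(-1)^{D_hf_b(x)}$ rather than directly with the Walsh values. Three identities drive everything.
\begin{itemize}
\item For each $b$ and $a$, $W_{f_b}(a)^2=\sum_h C_b(h)(-1)^{a\cdot h}$, and hence $\sum_a W_{f_b}(a)^4=N\sum_h C_b(h)^2$.
\item For every $h\ne0$, the APN property gives $\sum_{b}C_b(h)^2=N\cdot|\{(x,y):D_hF(x)=D_hF(y)\}|=2N^2$. Removing the term $b=0$ leaves $\sum_{b\ne0}C_b(h)^2=N^2$.
\item Summing over $h\ne0$, the excess $E_b:=\sum_{h\ne0}C_b(h)^2\ge0$ satisfies $\sum_{b\ne0}E_b=N^2(N-1)$.
\end{itemize}
For a plateaued component of amplitude $\lambda_b$ one has $\sum_aW^4=\lambda_b^2N^2$, so $E_b=N\lambda_b^2-N^2$. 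In every case I first dispose of a maximizer that is plateaued. Then $\Lin(F)=\lambda_{b_0}$ is a power of two, and it is strictly less than $N$ for $n\ge3$, so $\Lin(F)\le N/2$. It therefore suffices to treat a nonplateaued component $b_0$ with $|W_{f_{b_0}}(a_0)|=\Lin(F)=:L$.

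\textbf{Odd $n$.} Here every plateaued component has $\lambda_b^2\ge2N$, since $\lambda_b^2$ is an even power of two that is at least $N$. Hence $E_b\ge N^2$ for each of the $N-1-q$ plateaued components. From $\sum_bE_b=N^2(N-1)$, the total $E$ over the $q$ nonplateaued components is at most $qN^2$, and in particular $E_{b_0}\le qN^2$. Now write
\[
L^2-N=\sum_{h\ne0}C_{b_0}(h)(-1)^{a_0\cdot h}
\]
and apply Cauchy--Schwarz over the $N-1$ values $h\ne0$. This gives $(L^2-N)^2\le(N-1)E_{b_0}\le(N-1)qN^2$, which is exactly \eqref{eq:odd-count}.

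For the plateaued-maximizer case in odd dimension, I use the same bookkeeping. The maximizing component has $E=NL^2-N^2$, which exceeds the guaranteed $N^2$ by $NL^2-2N^2$. This forces $NL^2-2N^2\le qN^2$, i.e.\ $L^2\le N(2+q)$. That bound is at most $N(1+\sqrt{q(N-1)})$ except when $q$ is very close to $N$. In that range the trivial bound $L^2\le N^2/4$ already suffices.

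\textbf{Even $n$.} The same Cauchy--Schwarz step applies, but bent components now have $E_b=0$. So the count of plateaued components no longer controls $E_{b_0}$ by itself. I plan to combine three things.
\begin{itemize}
\item The splitting $N-1=\beta+\pi+q$, where $\beta$ counts bent components and $\pi$ counts nonbent plateaued components. Each nonbent plateaued component has $\lambda_b^2\ge4N$ and hence $E_b\ge3N^2$.
\item A lower bound for $E_{b_0}$ in terms of $L$. By Parseval and Cauchy--Schwarz on the remaining Walsh values, $N E_{b_0}+N^3=\sum_aW^4\ge L^4+(N^2-L^2)^2/(N-1)$.
\item An upper bound on $\beta$. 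For each $h\ne0$, $\sum_{b\ne0}C_b(h)^2=N^2$ forces $C_b(h)=0$ for bent $b$. I would exploit the linear identity $\sum_bC_b(h)=N|\{x:D_hF(x)=0\}|\in\{0,2N\}$ alongside this quadratic identity, to show that bent components cannot be too numerous. The aim is a Nyberg-type estimate of the form $\beta\le\frac23(N-1)$, or sharper.
\end{itemize}
Feeding the bound on $\beta$ and $\sum_bE_b=N^2(N-1)$ into these inequalities yields an inequality that is polynomial in $C=L/N$ and linear in $q/N$. I then check that it is violated when $q\le\frac{4C-C^2-1}{4}N+1$ and $L>CN$. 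The restriction $C\ge1/2$ is harmless, because of the plateaued-maximizer reduction above.

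\textbf{Expected obstacle.} The hard step is the even case: getting exactly the quadratic $(4C-C^2-1)/4$. A naive fourth-moment count only gives a bound of shape $C^4N\lesssim q$. To reach the linear-in-$q$ form I expect to need a sharper choice than plain Cauchy--Schwarz. I would try first a second-moment argument on the hyperplane intersection $\Gamma_F\cap\mathcal H$ of size $(N+L)/2$: count pairs of points in it whose sums lie in each $h$-direction, and use the Sidon property to bound the contributions of bent versus nonplateaued components separately.
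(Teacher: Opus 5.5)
Your odd-dimensional argument is correct and is the paper's argument in autocorrelation language. Since $E_b=N^2(\kappa(f_b)-1)$, your budget $\sum_{b\ne0}E_b=N^2(N-1)$ is the identity $\sum_{b\ne0}\kappa(f_b)=2(N-1)$. Cauchy--Schwarz over $h\ne0$ gives the same inequality $(L^2/N-1)^2\le(N-1)(\kappa(f_{b_0})-1)$ as the paper's Cauchy--Schwarz over $a\ne a_0$. Your case split for a plateaued maximizer ($L^2\le N(q+2)$ for $q\le N-4$, and $L\le N/2$ otherwise) validly replaces the paper's estimate $\kappa(\kappa-2)+1\le Nq/4+1$.

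The even case, however, has a genuine gap. The bound $\beta\le\frac23(N-1)$ is only an aim, with no argument. The two identities you cite do not force it by themselves, since for each $h$ they are satisfied by a single $b$ with $C_b(h)=\pm N$. More seriously, even granting it, your bookkeeping gives $E_{b_0}\le N^2(N-1)-3N^2\pi\le 3qN^2$. Hence $(L^2-N)^2\le 3q(N-1)N^2$, which yields only $q\gtrsim C^4N/3$ when $L>CN$. Since $C^4/3<(4C-C^2-1)/4$ on all of $[1/2,1]$ (e.g.\ $N/48$ versus $3N/16$ at $C=1/2$), this cannot give the stated implication. The obstruction is structural. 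In the global count, bent components have zero excess and absorb none of the budget $N^2(N-1)$. Any argument through $\sum_bE_b$ alone must therefore control $\beta$ separately, and even then it reaches only a bound of order $C^4N$.

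The missing idea is to localize to the small disagreement set. Choose $\ell$ affine with $g=f_{b_0}+\ell=\ind_E$ and $|E|=t=(N-L)/2<N/4$. Apply the Sidon fourth-moment identity to $A=\{(x,F(x)):x\in E\}$, with $T_b(a)=\sum_{x\in E}(-1)^{f_b(x)+a\cdot x}=(W_{f_b}(a)-W_{f_b+g}(a))/2$ and $S_b=\sum_aT_b(a)^4$. This gives $\sum_bS_b=N^2(3t^2-2t)$, whereas Cauchy--Schwarz only gives the baseline $S_b\ge Nt^2$.

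Now pair $b$ with $b+b_0$, so that $S_{b+b_0}=S_b$ and $f_b+g=f_{b+b_0}+\ell$. Suppose both members of a pair are plateaued. Then $f_b$ and $f_b+g$ share a magnitude $\lambda$, since otherwise $N(N-2t)=\sum_aW_{f_b}(a)W_{f_b+g}(a)\le N^2/2$, contradicting $t<N/4$. Hence every nonzero $T_b(a)$ has magnitude at least $\sqrt N$: only $\lambda=\sqrt N$ occurs if both are bent, and $\lambda/2\ge\sqrt N$ otherwise. This gives $S_b\ge N^2t$, so bent components are harmless here.

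Using $S_0\ge t^4$ and the at least $N/2-q$ plateaued pairs, the identity gives $q\ge N/2+1-t-t^2/N$. Substituting $t<(1-C)N/2$ turns this into $q>\frac{4C-C^2-1}{4}N+1$. Your closing remark about the hyperplane intersection points in this direction. What is actually needed is a fourth-moment count on the complementary set $E$, together with the pairing and magnitude-matching step.
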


In particular, along a sequence $F_n$ of APN functions in odd dimensions $n$ tending to infinity, we have the implication
\[
\Lin(F_n)/N\to1\quad\Longrightarrow\quad q_n/N\to1,
\]
where $q_n$ counts the nonplateaued nonzero components of $F_n$.
In even dimension, the theorem allows a positive proportion of
nonplateaued components while still guaranteeing $\Lin(F)\le CN$
for each fixed $1/2\le C<1$. In particular,
\begin{equation}
\label{eq:proportion-special}
q\le\frac{3N}{16}+2
\quad\Longrightarrow\quad
\Lin(F)\le\frac N2.
\end{equation}
Proposition~\ref{prop:even-count} expresses the even-dimensional bound
in terms of the minimum distance $t=(N-\Lin(F))/2$ of a nonzero
component from an affine function.

The proofs use the classical APN fourth-moment identity of Chabaud and Vaudenay~\cite{CV94}. In odd dimension, combining this identity with the fourth-moment lower bounds $N^3$ for arbitrary Boolean functions and $2N^3$ for plateaued Boolean functions bounds the fourth moment of each nonplateaued component in terms of $q$. Parseval's identity and Cauchy-Schwarz then bound its individual Walsh coefficients.
In even dimension, the Sidon fourth-moment identity is applied to the graph over the set where a component attaining the linearity differs from its closest affine function. The other components are paired by addition of this component. For pairs in which both components are plateaued, their Walsh spectra impose lower bounds on the restricted fourth moments.

Section~\ref{sec:prelim} recalls basic facts and character-moment
identities. Section~\ref{sec:small} proves Theorem~\ref{thm:small},
Section~\ref{sec:second} proves Theorem~\ref{thm:second}, and
Section~\ref{sec:count} proves Theorem~\ref{thm:count}.

\section{Preliminaries}\label{sec:prelim}

Throughout this article, assume $n \geq 4$ unless explicitly stated otherwise. The usual dot product is used to identify each binary vector space
with its character group. For $a\in \F_2^n$, write
$\chi_a(x)=(-1)^{a\cdot x}$. For $u\colon \F_2^n\to\R$, denote by $\widehat{u}$ its discrete Fourier transform, defined as
\[
\widehat u(a)=\sum_{x\in \F_2^n}u(x)\chi_a(x).
\]
Orthogonality of characters, Fourier inversion, and Parseval's identity yield
\begin{equation*}
\sum_{a\in \F_2^n}\chi_a(x)=N\ind_{\{0\}}(x),\qquad
u(x)=\frac1N\sum_{a\in \F_2^n}\widehat u(a)\chi_a(x),\qquad
\sum_a\widehat u(a)^2=N\sum_xu(x)^2.
\end{equation*}
More generally, for $u,v\colon\F_2^n\to\R$,
Parseval's identity gives
\begin{equation*}
\sum_{a\in\F_2^n}\widehat u(a)\widehat v(a)
=N\sum_{x\in\F_2^n}u(x)v(x).
\end{equation*}

For a Boolean function $f \colon \F_2^n \to \F_2$, write $s_f=(-1)^f$, so that
$W_f=\widehat{s_f}$ and $\sum_aW_f(a)^2=N^2$.
For an affine Boolean function $\ell(x)=a_0\cdot x+c$, we have
\[
W_{f+\ell}(a)=(-1)^cW_f(a+a_0).
\]
Thus affine addition preserves plateauedness and the absolute Walsh
spectrum.
Write $D_hf(x)=f(x+h)+f(x)$ for its derivative in direction $h$ and
$\wt(f)=|\{x:f(x)=1\}|$ for its weight. A Boolean function is called
\emph{balanced} if its weight is $N/2$.

\subsection{Bent functions and affine components}

A Boolean function is \emph{bent}~\cite{Rothaus76} if every Walsh coefficient has
absolute value $\sqrt N$. The following classical facts are included
with proofs; see also~\cite{CarletBook} for a comprehensive textbook on the topic.

\begin{lemma}\label{lem:plateaued-magnitude}
A nonaffine plateaued Boolean function on $\F_2^n$ has all its absolute Walsh
coefficients bounded above by $N/2$.
\end{lemma}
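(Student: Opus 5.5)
Let $f$ be a nonaffine plateaued Boolean function on $\F_2^n$ with amplitude $\lambda$, so that $W_f(a)\in\{0,\pm\lambda\}$ for all $a$. I will first show that $\lambda$ is a power of two dividing $N$, and then rule out $\lambda=N$; together these force $\lambda\le N/2$.

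\emph{Step 1 (Parseval).} Parseval's identity gives $\sum_aW_f(a)^2=N^2$. If $m$ is the number of $a$ with $W_f(a)\ne0$, this reads $m\lambda^2=N^2$. Since $m$ is a positive integer, $\lambda^2=N^2/m=2^{2n}/m$.

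\emph{Step 2 (integrality).} Every Walsh coefficient is an integer congruent to $N$ modulo $2$. Hence $\lambda$ is a positive integer, and $\lambda^2$ is an integer dividing $2^{2n}$. So $\lambda^2=2^j$ for some $j\le 2n$. If $j$ were odd, $\lambda=2^{j/2}$ would be irrational, contradicting integrality. Therefore $\lambda=2^i$ with $i\le n$, i.e.\ $\lambda\in\{1,2,4,\dots,N\}$.

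\emph{Step 3 (excluding $\lambda=N$).} Suppose $\lambda=N$. Then $|W_f(a)|=N$ for some $a$, which means
\[
\sum_x(-1)^{f(x)+a\cdot x}=\pm N.
\]
This forces $f(x)+a\cdot x$ to be constant, so $f$ is affine, contrary to hypothesis. Hence $\lambda\le N/2$, and every absolute Walsh coefficient is bounded by $N/2$.

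The only step needing care is Step 2: we must use the integrality of $\lambda$ to exclude $\lambda^2=2^{\text{odd}}$, and not merely that $\lambda^2$ divides $N^2$. Everything else is routine.
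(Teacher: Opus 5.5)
Your proof is correct and follows essentially the same route as the paper: Parseval gives $m\lambda^2=N^2$, integrality of $\lambda$ forces it to be a power of two at most $N$, and $\lambda=N$ is excluded because it makes $f$ affine. The only cosmetic difference is that you exclude $\lambda=N$ directly from $\sum_x(-1)^{f(x)+a\cdot x}=\pm N$, whereas the paper notes $m=1$ and applies Fourier inversion to obtain $(-1)^{f}=\pm\chi_a$.
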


\begin{proof}
Let $\lambda$ be its nonzero Walsh magnitude and let $k$ be the
number of nonzero Walsh coefficients. Parseval's identity gives
$k\lambda^2=N^2$. Since $\lambda$ is a positive integer and $N$ is a
power of two, $\lambda$ is a power of two. If $\lambda=N$, then
$k=1$, and Fourier inversion gives $s_f=\pm\chi_a$ for some $a$;
thus $f$ is affine. Otherwise $\lambda\le N/2$.
\end{proof}

Part~(1) of the next lemma is the classical derivative characterization
of bent Boolean functions. The output-dimension
restriction in part~(2) is due to Nyberg~\cite{Nyberg91}. The proof below follows the Fourier-inversion
and integrality argument of her original proof; see also
\cite[Theorem~4.6 and Remark~4.7]{KolschPolujan24} for a generalization.
Part~(3) follows from~(1) and~(2); this is Carlet's argument as recorded in~\cite[p.~424]{Carlet10}.

\begin{lemma}\label{lem:bent}
Let $n$ and $m$ be positive integers.
\begin{enumerate}
\item A Boolean function on $\F_2^n$ is bent if and only if $D_hf$ is
balanced for every $h\ne0$.
\item If $K\colon \F_2^n\to\F_2^m$ has every nonzero
component bent, then $n$ is even and $m\le n/2$.
\item If $n\ge3$, no map $K\colon \F_2^n\to\F_2^{n-1}$ has a Sidon
graph. In particular, an APN map $F\colon \F_2^n\to \F_2^n$ has no affine
nonzero component.
\end{enumerate}
\end{lemma}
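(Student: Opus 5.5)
For part~(1), I will compute the sum of squared autocorrelations through the Walsh transform. Put $C_f(h)=\sum_x(-1)^{D_hf(x)}=\sum_x s_f(x)s_f(x+h)$. This is the convolution of $s_f$ with itself, so $\widehat{C_f}(a)=W_f(a)^2$. By Parseval,
\[
\sum_a W_f(a)^4=N\sum_h C_f(h)^2 .
\]
If $D_hf$ is balanced for every $h\ne0$, then $C_f(h)=0$ for $h\ne0$, and $C_f=N\ind_{\{0\}}$. Fourier inversion then gives $W_f(a)^2=\widehat{C_f}(a)=N$ for all $a$, so $f$ is bent. Conversely, if $W_f^2\equiv N$, then $C_f$ is the inverse transform of a constant. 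Hence $C_f=N\ind_{\{0\}}$, which means every $D_hf$ with $h\ne0$ is balanced.

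For part~(2), I follow Nyberg's integrality argument. Bentness forces $n$ even, because $W_f(a)=\pm\sqrt N$ is an integer. Count the fiber $A=K^{-1}(0)$. For every $x$,
\[
\ind_A(x)=2^{-m}\sum_{b\in\F_2^m}(-1)^{b\cdot K(x)} .
\]
Taking the Fourier transform at $a\ne0$ gives
\[
\widehat{\ind_A}(a)=2^{-m}\sum_{b\ne0}W_{b\cdot K}(a),
\]
since the $b=0$ term vanishes. Each $W_{b\cdot K}(a)$ equals $\pm 2^{n/2}$. The left side is an integer, so the sum of $2^m-1$ signs times $2^{n/2}$ is divisible by $2^m$. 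A sum of $2^m-1$ terms $\pm1$ is odd, so $2^m\mid 2^{n/2}$, that is, $m\le n/2$. This divisibility step is the crux, and I would check the sign-parity argument carefully there.

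For part~(3), suppose $K\colon\F_2^n\to\F_2^{n-1}$ has a Sidon graph. For $h\ne0$, the derivative $D_hK$ takes every value at most twice, since two distinct pairs $\{x,x+h\}$ with the same difference would give four distinct graph points summing to zero. There are $N/2$ unordered pairs $\{x,x+h\}$ and only $2^{n-1}=N/2$ possible values. So $D_hK$ is two-to-one onto $\F_2^{n-1}$, and every nonzero component $b\cdot D_hK$ is balanced. By~(1), every nonzero component of $K$ is bent. Then~(2) gives $n-1\le n/2$, i.e.\ $n\le2$, a contradiction.

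For the APN consequence, suppose $b\cdot F$ is affine for some $b\ne0$. Choose an invertible linear map $L$ with last coordinate $b\cdot y$. The map $(x,y)\mapsto(x,L y)$ sends $\Gamma_F$ to the graph of a function whose last coordinate is an affine function of $x$. Projecting that coordinate away is injective on this graph and linear, so it preserves the Sidon property. The result is a Sidon graph of a map $\F_2^n\to\F_2^{n-1}$, which is excluded.
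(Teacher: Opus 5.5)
Your parts (1)--(3) follow the paper's argument. For (1) you use the autocorrelation $C_f$ with $\widehat{C_f}=W_f^2$ and Fourier inversion. For (2) you use Nyberg's integrality argument. For (3) you count that $D_hK$ is two-to-one and feed this into (1) and (2). In (2) you take the Fourier transform of $\ind_{K^{-1}(0)}$ at a nonzero frequency, whereas the paper computes fiber sizes via $W_{b\cdot K}(0)$. This is a harmless variant that even removes the $N$ term, and with it the appeal to $n\ge2$ in the parity count.

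The gap is in the final APN deduction. You say the projection preserves the Sidon property because it is linear and injective on the graph, but that principle is false. If $\pi$ is linear and $\pi(z_1)+\cdots+\pi(z_4)=0$, you only get $z_1+\cdots+z_4\in\ker\pi$, not $z_1+\cdots+z_4=0$. Indeed, $(x,y_1,\dots,y_n)\mapsto(x,y_1,\dots,y_{n-1})$ is linear and injective on every function graph. Applied to any APN function, for example $x\mapsto x^3$ on $\F_{2^n}$ with $n\ge3$, your justification would give a Sidon graph of a map $\F_2^n\to\F_2^{n-1}$, which the first half of (3) forbids.

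What makes the step work is a fact you noted but did not use: the dropped coordinate equals $a_0\cdot x+c$. If four distinct projected points sum to zero, then $x_1+\cdots+x_4=0$. The dropped coordinates then sum to $a_0\cdot(x_1+\cdots+x_4)+4c=0$. So the four original graph points also sum to zero, contradicting the Sidon property. Equivalently, the graph lies in an affine hyperplane whose direction space meets $\ker\pi$ trivially. The paper avoids the issue by first adding the affine map $x\mapsto\ell(x)e$, with $b\cdot e=1$, so that the component becomes identically zero. Deleting a zero coordinate is then clearly harmless. With this one-line repair your proof is complete.
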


\begin{proof}
Expanding a squared Walsh coefficient and writing $h=x+y$ gives
\[
W_f(a)^2
 =\sum_{h\in \F_2^n}\chi_a(h)\sum_{x\in \F_2^n}(-1)^{D_hf(x)}.
\]
The inner sum is $N$ for $h=0$. By Fourier inversion, all the squares
$W_f(a)^2$ equal $N$ precisely when the inner sum is zero for each
$h\ne0$. This proves (1).

For (2), $n$ has to be even since the Walsh coefficients are all integers.
For each $b\ne0$, write
$W_{b\cdot K}(0)=2^{n/2}\epsilon_b$, with $\epsilon_b\in\{1,-1\}$.
Character orthogonality gives, for every $y\in\F_2^m$,
\[
2^m|K^{-1}(y)|
 = \sum_{x\in\F_2^n}\sum_{b\in\F_2^m}
       (-1)^{b\cdot(K(x)+y)}
 = N+\sum_{b\ne0}(-1)^{b\cdot y}W_{b\cdot K}(0)
 = N+2^{n/2}\sum_{b\ne0}\epsilon_b(-1)^{b\cdot y}.
\]
If $m>n/2$, dividing by $2^{n/2}$ makes the left-hand side an even integer.
The right-hand side is then $2^{n/2}$ plus a sum of $2^m-1$ odd integers, and is therefore odd since $n\ge2$. This is a contradiction.

For (3), a Sidon graph makes $D_hK$ at most two-to-one for every
$h\ne0$. Indeed, two different pairs $\{x,x+h\}$ with the same
image would give four distinct graph points summing to zero. Since
the codomain has $N/2$ elements, every $D_hK$ would be exactly
two-to-one onto the codomain. Every nonzero component of $K$ would
then have all its nonzero derivatives balanced and hence be bent
by (1). This contradicts (2), since $n-1>n/2$ for $n\ge3$.
Finally, an affine nonzero component of $F$ can be made zero by an affine
addition. An invertible linear change of output coordinates then puts this component into a coordinate position. Deleting the resulting zero coordinate would give such a map $K$, a contradiction.
\end{proof}

\subsection{Character-sum moments}
Let $G$ be a binary vector space and let $A\subseteq G$. Write
\[
\alpha_\chi=\sum_{z\in A}\chi(z),\qquad\chi\in\widehat G.
\]
The following identities are straightforward consequences of character orthogonality
and elementary counting, see also 
\cite[Chapter~4]{TaoVu06}.

Expanding and applying character orthogonality gives, for $j\ge1$,
\begin{equation}\label{eq:character-moments}
\sum_{\chi\in\widehat G}\alpha_\chi^j
=|G| \cdot \bigl|\{(z_1,\ldots,z_j)\in A^j:z_1+\cdots+z_j=0\}\bigr|.
\end{equation}
In particular, $\sum_\chi\alpha_\chi^2=|G||A|$. More generally, for every $s\in G$, we have
\begin{equation}\label{eq:shifted-second-moment}
\sum_{\chi\in\widehat G}\chi(s)\alpha_\chi^2
=|G|\cdot
\bigl|\{(z_1,z_2)\in A^2:z_1+z_2=s\}\bigr|
=|G|\cdot|A\cap(A+s)|.
\end{equation}
Indeed, expanding $\alpha_\chi^2$ and applying character
orthogonality to $\chi(s+z_1+z_2)$ leaves precisely the ordered
pairs satisfying $z_1+z_2=s$.

If $A$ is Sidon, every ordered zero-sum quadruple consists of two
equal pairs. There are $|A|+6\binom{|A|}{2}=3|A|^2-2|A|$ such quadruples, so
\begin{equation}\label{eq:sidon-fourth}
\sum_\chi\alpha_\chi^4=|G|(3|A|^2-2|A|).
\end{equation} 

For a Boolean function $f$ on $\F_2^n$, let
\[
M_4(f)=\sum_{a\in \F_2^n}W_f(a)^4.
\]
Applying~\eqref{eq:sidon-fourth} to an APN graph gives the classical
Chabaud-Vaudenay identity~\cite{CV94}, recalled in the next lemma.

\begin{lemma}\label{lem:APNidentity}
If $F\colon \F_2^n\to \F_2^n$ is APN, then
\begin{equation}\label{eq:M4APN}
\sum_{b\ne0}M_4(f_b)=2N^3(N-1).
\end{equation}
\end{lemma}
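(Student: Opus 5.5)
We apply the Sidon fourth-moment identity \eqref{eq:sidon-fourth} to the graph $A=\Gamma_F$ inside $G=\F_2^n\times\F_2^n$. Here $|G|=N^2$ and $|A|=N$. The graph is Sidon because $F$ is APN, as recalled in the introduction. Identifying $\widehat G$ with pairs $(a,b)\in\F_2^n\times\F_2^n$ via $\chi_{(a,b)}(x,y)=(-1)^{a\cdot x+b\cdot y}$, the character sum is
\[
\alpha_{(a,b)}=\sum_{x\in\F_2^n}(-1)^{a\cdot x+b\cdot F(x)}=W_{f_b}(a).
\]
Therefore \eqref{eq:sidon-fourth} gives
\[
\sum_{b\in\F_2^n}\sum_{a\in\F_2^n}W_{f_b}(a)^4=N^2(3N^2-2N)=3N^4-2N^3 .
\]

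Next we separate off the term $b=0$. The component $f_0$ is identically zero, so $W_{f_0}(a)=N\ind_{\{0\}}(a)$ and $M_4(f_0)=N^4$. Subtracting this term yields
\[
\sum_{b\ne0}M_4(f_b)=3N^4-2N^3-N^4=2N^4-2N^3=2N^3(N-1),
\]
which is \eqref{eq:M4APN}.

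The only point that needs care is the identification of characters of $G$ with Walsh coefficients. Once the pairing is fixed as above, this is immediate, so there is no real obstacle. The Sidon property is used exactly through the count of zero-sum quadruples in \eqref{eq:sidon-fourth}.
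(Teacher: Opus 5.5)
Your proposal is correct and follows the paper's proof essentially verbatim: apply \eqref{eq:sidon-fourth} to $A=\Gamma_F\subseteq\F_2^n\times\F_2^n$, identify the character sums with $W_{f_b}(a)$ to get $N^2(3N^2-2N)$, and subtract the zero component's contribution $N^4$.
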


\begin{proof}
For $A=\Gamma_F\subseteq G=\F_2^n\times \F_2^n$, we have $|A|=N$, $|G|=N^2$,
and $\alpha_{\chi_{a,b}}=W_{f_b}(a)$, where $\chi_{a,b}(x,y)=(-1)^{a\cdot x+b\cdot y}$ for $a,b\in \F_2^n$. Equation~\eqref{eq:sidon-fourth}
gives $\sum_{a,b}W_{f_b}(a)^4=N^2(3N^2-2N)$.
The zero component contributes $N^4$; subtracting it proves
\eqref{eq:M4APN}.
\end{proof}

\section{Components of small support}\label{sec:small}

The bound~\eqref{eq:previous} has its coding-theoretic origin in
Brouwer and Tolhuizen~\cite{BrouwerTolhuizen93}; see the Sidon-set
formulation in~\cite[Proposition~1.2 and Corollary~3.7]{CP26}. See
\cite{CCZ98} for the relation between APN functions and codes.
For the result presented here, the technique is to delete points from the function graph of an APN function to impose congruences on the character
sums of the projected Sidon set. These congruences are then combined with the second and fourth moments of those character sums. 

Throughout this section, let $U=\F_2^{n-1}$. All sums of
vectors are taken in the indicated binary vector space. 
The equivalence between the APN condition and the Sidon property
of the graph also applies to maps with a codomain of a different
dimension, using the same condition on the number of solutions
of each derivative equation.

Given a nonzero component $f_{b_0}$ of an APN map $F$ and an affine
function $\ell$,
write $g=f_{b_0}+\ell=\ind_E$, so that $|E|=d_H(f_{b_0},\ell)$.
Complete $b_0$ to a basis $b_0,b_1,\ldots,b_{n-1}$ of $\F_2^n$ and set
$H=(f_{b_1},\ldots,f_{b_{n-1}})\colon \F_2^n\to U$.
Then $(g,H)$ remains APN. Thus it suffices to consider APN
maps $F=(\ind_E,H)$. Input translations, invertible linear changes
of output coordinates, and affine additions to the output also
preserve the APN property. For such a map $F=(\ind_E,H)$, the set
\begin{equation}\label{eq:partialgraph}
\{(x,H(x)):x\in \F_2^n\setminus E\}\subseteq \F_2^n\times U
\end{equation}
is Sidon. Indeed, four points violating the Sidon condition would lift to
four points of $\Gamma_F$ with first output coordinate zero.

\subsection{An observation about derivatives}\label{subsec:derivatives}
The following lemma records two elementary consequences of the
classical characterization of APN functions by
$|\im D_hF|=N/2$ for every $h\ne0$.

\begin{lemma}\label{lem:derivative}
Let $F=(g,H)\colon \F_2^n\to\F_2\times U$ be APN and let
$\sigma=\sum_{x\in \F_2^n}H(x)$. For $h\ne0$ the following hold.
\begin{enumerate}
\item If $D_hg=0$, then $\sigma=0$.
\item If $\wt(D_hg)=2$, then $D_h(H+\sigma g)$ is exactly
two-to-one onto $U$.
\end{enumerate}
\end{lemma}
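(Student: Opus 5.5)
The plan is to work with the classical fact that for an APN map and $h\ne0$, the derivative $D_hF$ is constant on each pair $\{x,x+h\}$ and takes pairwise distinct values on distinct pairs. So $D_hF$ is exactly two-to-one onto its image of size $N/2$. Fix a set $R$ of representatives of the $N/2$ cosets of $\{0,h\}$. The key bookkeeping identity, obtained by grouping the terms of $\sigma$ in pairs $\{x,x+h\}$, is
\[
\sigma=\sum_{x\in \F_2^n}H(x)=\sum_{x\in R}\bigl(H(x)+H(x+h)\bigr)=\sum_{x\in R}D_hH(x).
\]
I will also use that the sum of all elements of $U=\F_2^{n-1}$ is $0$, since $n-1\ge2$.

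For (1), suppose $D_hg=0$. Then $D_hF(x)=(0,D_hH(x))$, so the $N/2$ distinct values $D_hF(x)$, $x\in R$, lie in $\{0\}\times U$, which has exactly $N/2$ elements. Hence $x\mapsto D_hH(x)$ is a bijection from $R$ onto $U$, and $D_hH$ is exactly two-to-one onto $U$. By the identity above, $\sigma=\sum_{u\in U}u=0$.

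For (2), suppose $\wt(D_hg)=2$. The support of $D_hg$ is then a single pair $\{x_0,x_0+h\}$, and we may take $x_0\in R$. On $R\setminus\{x_0\}$ we have $D_hF(x)=(0,D_hH(x))$, and these $N/2-1$ values are distinct. So $D_hH$ maps $R\setminus\{x_0\}$ injectively into $U$, missing exactly one element $u^*$. Since $D_h(H+\sigma g)=D_hH+\sigma D_hg$, this map agrees with $D_hH$ off the pair $\{x_0,x_0+h\}$ and equals $D_hH(x_0)+\sigma$ on that pair. It therefore suffices to show $D_hH(x_0)+\sigma=u^*$. From the identity,
\[
\sigma=D_hH(x_0)+\sum_{u\ne u^*}u=D_hH(x_0)+u^*,
\]
which gives $D_hH(x_0)+\sigma=u^*$. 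Hence $D_h(H+\sigma g)$ takes every value of $U$ on exactly one pair, i.e.\ it is exactly two-to-one onto $U$.

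The main (mild) obstacle is the idea of writing $\sigma$ as a sum of derivative values over coset representatives; once that is in place, both parts are direct counting arguments.
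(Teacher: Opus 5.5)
Your proof is correct and follows essentially the same route as the paper. You write $\sigma$ as the sum of $D_hH$ over one representative of each pair $\{x,x+h\}$, use that the $N/2$ values of $D_hF$ on these pairs are distinct, and in part (2) use $\sum_{u\in U}u=0$ to show that the missing value $u^*$ (the paper's $r$) equals $D_hH(x_0)+\sigma$.
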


\begin{proof}
Let $\mathcal P_h=\{\{x,x+h\}:x\in\F_2^n\}$.
These $N/2$ unordered pairs partition $\F_2^n$.
The APN property says that the vectors
\[
F(x)+F(x+h)
 =\bigl(g(x)+g(x+h),H(x)+H(x+h)\bigr),
\]
one for each pair $\{x,x+h\}$ in $\mathcal P_h$, are distinct. Moreover,
\[
\sum_{\{x,x+h\}\in\mathcal P_h}\bigl(H(x)+H(x+h)\bigr)
 =\sum_{x\in\F_2^n}H(x)=\sigma.
\]
If $D_hg=0$, the values $H(x)+H(x+h)$ associated with these
pairs are themselves distinct and therefore run through $U$.
Their sum is zero, proving (1).

If $\wt(D_hg)=2$, there is exactly one pair
$\{x_0,x_0+h\}$ on which $g$ changes.
Let $c=H(x_0)+H(x_0+h)$.
The other $N/2-1$ pairs give distinct values $H(x)+H(x+h)$,
so these occupy $U\setminus\{r\}$ for some $r\in U$.
Their sum is $r$, since $\sum_{y\in U}y=0$.
Thus $\sigma=c+r$.
Replacing $H$ by $H+\sigma g$ leaves the values of $H(x) + H(x+h)$ associated with
the other pairs unchanged and replaces $c$ by $c+\sigma=r$.
Every value in $U$ now occurs on exactly one pair, proving (2).
\end{proof}

This observation also excludes a component that differs from an affine
function at exactly one input, as already implied by~\eqref{eq:previous}.
Indeed, assume for the sake of contradiction that such an APN function exists. Without loss of generality, such a map is of the form $(g,H)$ with $g=\ind_E$,
$|E|=1$, and $\wt(D_hg)=2$ for every $h\ne0$. By Lemma~\ref{lem:derivative}(2), the map
$H+\sigma g\colon \F_2^n\to U$
would have every nonzero derivative exactly two-to-one onto $U$.
Its graph would be Sidon, contradicting Lemma~\ref{lem:bent}(3).
Together with the exclusion of affine components, this recovers
\eqref{eq:previous} for $n \geq 4$. The next step is to exclude support size two, and then support sizes
at most four in even dimensions $n\ge6$.

\subsection{Support of size two}

The argument uses the character sums from Section~\ref{sec:prelim} with
$G=\F_2^n\times U$ and $|G|=N^2/2$. For $A\subseteq G$,
write $\alpha_\chi=\sum_{z\in A}\chi(z)$.
The next lemma will be needed to deduce the final contradiction. The proof uses a polynomial that is nonnegative on $4\Z+2$,
together with the prescribed second and fourth moments. For a related use of power moments and nonnegative polynomials in the context of APN functions, see~\cite[Theorem~3]{CCD99}. Below, $\E$ denotes the arithmetic
mean over the index $i$. 

\begin{lemma}\label{lem:arithmetic}
Let $N\ge16$ be an integer divisible by $16$, and let
$(X_i)_{i=1}^p$ and $(Y_i)_{i=1}^p$ be integer $p$-tuples, where $p\ge1$,
such that
\begin{gather}
X_i\in4\Z+2,\qquad Y_i\in4\Z,\label{eq:arith-congruences}\\
\E X_i^2=N-4,\qquad \E Y_i^2=N,\label{eq:arith-second}\\
\E(X_i^4+Y_i^4)=2N^2+8N-48.\label{eq:arith-fourth}
\end{gather}
Then $N$ is a square and $Y_i^2=N$ for every $i$.
\end{lemma}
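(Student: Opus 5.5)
The plan is to reduce everything to two one-variable variance estimates, one for $X_i^2$ and one for $Y_i^2$, and show that they fit together only in the extremal configuration.

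\textbf{Step 1: center the fourth moments.} Using \eqref{eq:arith-second}, we have $\E(X_i^2-(N-4))^2=\E X_i^4-(N-4)^2$ and $\E(Y_i^2-N)^2=\E Y_i^4-N^2$. Adding these and applying \eqref{eq:arith-fourth} gives
\[
\E\bigl(X_i^2-(N-4)\bigr)^2+\E\bigl(Y_i^2-N\bigr)^2=2N^2+8N-48-(N-4)^2-N^2=16N-64 .
\]
So it suffices to find lower bounds $V_X$ and $V_Y$ for the two variances whose sum is at least $16N-64$, with equality only when $N$ is a square and every $Y_i^2=N$.

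\textbf{Step 2: a variance bound from a quadratic nonnegative on the lattice.} By \eqref{eq:arith-congruences}, the numbers $X_i^2$ lie in $S_X=\{(4k+2)^2:k\in\Z\}$ and the numbers $Y_i^2$ lie in $S_Y=\{16m^2:m\in\Z\}$. Let $u<v$ be the consecutive elements of $S_X$ with $u\le N-4<v$; in fact $u<N-4$, since $N-4\equiv12\pmod{16}$ while every element of $S_X$ is $\equiv4\pmod{32}$. The polynomial $(T-u)(T-v)$ is nonnegative on $S_X$, so
\[
\E\bigl(X_i^2-(N-4)\bigr)^2\ge V_X:=(N-4-u)(v-N+4).
\]
Similarly, let $s\le N<t$ be consecutive in $S_Y$. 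Then $\E(Y_i^2-N)^2\ge V_Y:=(N-s)(t-N)$, and $V_Y=0$ exactly when $N\in S_Y$. Since $16\mid N$, this happens exactly when $N$ is a square, because then $\sqrt N$ is automatically divisible by $4$.

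\textbf{Step 3: the square case.} Suppose $N=16m^2$. Then $u=(4m-2)^2=N-16m+4$ and $v=(4m+2)^2=N+16m+4$, so
\[
V_X=(16m-8)(16m+8)=256m^2-64=16N-64 .
\]
Hence Step 1 forces $\E(Y_i^2-N)^2=0$, that is, $Y_i^2=N$ for every $i$, which is the claim.

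\textbf{Step 4: the nonsquare case (main obstacle).} Suppose $N=16M$ with $M$ not a square, and write $j^2<M<(j+1)^2$. I must show $V_X+V_Y>16N-64$, which contradicts Step 1 and so excludes this case.

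Here $V_Y=256(M-j^2)\bigl((j+1)^2-M\bigr)$ is at least $256\cdot 2j$. For $V_X$, write the bracketing odd squares as $r^2<4M-1<(r+2)^2$ with $r$ odd. Then $V_X=16\,(4M-1-r^2)\bigl((r+2)^2-4M+1\bigr)/4$, where the factors are $\equiv2\pmod4$ by the congruence noted in Step 2.

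I would parametrize $M=j^2+e$ with $1\le e\le 2j$, so that $r\in\{2j-1,2j+1\}$ according to whether $4e-1$ is smaller or larger than $4j+1$. Then $V_X$ and $V_Y$ become explicit quadratics in $e$. The goal is to check that their sum exceeds $256(j^2+e)-64$ on the whole range.

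The expected mechanism is that $V_X$ alone is close to $16N-64$ near the ends of the interval, and $V_Y\ge512j$ supplies the missing margin. This elementary but fiddly inequality is the step I expect to need the most care, particularly the behavior near the switch point $e\approx j$ where $r$ changes. If it fails for small $j$, those finitely many cases should be checked directly.
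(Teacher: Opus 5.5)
Your Steps 1--3 are correct. The centered identity $\E(X_i^2-(N-4))^2+\E(Y_i^2-N)^2=16N-64$, the lattice variance bounds $V_X,V_Y$, and the square case (where $V_X=16N-64$ exactly) all check out. The gap is Step 4. This is where the conclusion ``$N$ is a square'' comes from, and you only sketch it; as sketched, it would not go through.

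First, your formula for $V_X$ has a spurious factor $1/4$. Since $N-4-u=4(4M-1-r^2)$ and $v-N+4=4\bigl((r+2)^2-4M+1\bigr)$, the correct value is $V_X=16(4M-1-r^2)\bigl((r+2)^2-4M+1\bigr)$. You can test it on $M=m^2$, where it must give $16N-64$. With your version the desired inequality already fails at $e=1$ for $j\ge3$.

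Second, the proposed mechanism ``$V_Y\ge 512j$ supplies the missing margin'' is insufficient. For $1\le e\le j$ one computes $V_X=64\bigl(4j^2-(2e-1)^2\bigr)$. So the deficit $16N-64-V_X=256e^2$ is of order $256j^2$ near $e\approx j$, and you need the full value $V_Y=256e(2j+1-e)$, not its minimum over $e$.

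With the corrected formula the computation does close, with no exceptional small cases. The difference $V_X+V_Y-(16N-64)$ equals $256e(2j+1-2e)$ for $1\le e\le j$ and $256(2j+1-e)(2e-2j-1)$ for $j+1\le e\le 2j$, and both are positive.

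There is a cleaner way to see this, which also shows the link to the paper. The consecutive elements of $S_X$ bracketing $N-4$ are always $(a-2)^2,(a+2)^2$, where $a\in4\Z$ is chosen with $a^2$ closest to $N$. Then $V_X=16N-64-(N-a^2)^2$ identically. Meanwhile $V_Y=(N-s)(t-N)\ge(N-a^2)^2$, with equality only if $N\in S_Y$ or $N$ is the midpoint $16k^2+16k+8$ of $s$ and $t$; the latter is ruled out by $16\mid N$.

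The paper packages all of this into one step and avoids the case analysis. It uses $P_a(T)=\bigl(T^2-(a-2)^2\bigr)\bigl(T^2-(a+2)^2\bigr)\ge0$ on $4\Z+2$ to obtain the identity $\E P_a(X_i)+\E(Y_i^2-N)^2=(N-a^2)^2$. It then needs only the pointwise bound $(Y_i^2-N)^2\ge(N-a^2)^2$, which forces $Y_i^2=a^2$ for all $i$. The mean condition is used only at the very end, to conclude $N=\E Y_i^2=a^2$. Your sharper variance bound for $Y$ is valid, but it is more than is needed.
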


\begin{proof}
 Choose $a\in4\Z_{\ge0}$ such that $a^2$ is closest to $N$ among
the squares of multiples of four. This square is unique because
$N\in16\Z$ and the difference between two consecutive such squares is in $32\Z + 16$.
Let
\[
P_a(T)=\bigl((T+a)^2-4\bigr)\bigl((T-a)^2-4\bigr)=T^4-2(a^2+4)T^2+(a^2-4)^2.
\]
For $T\in4\Z+2$, both $T+a$ and $T-a$ are congruent to two
modulo four, so each factor is nonnegative. 
Using~\eqref{eq:arith-second} and~\eqref{eq:arith-fourth}, this yields
\[
\E P_a(X_i)+\E\bigl[(Y_i^2-N)^2\bigr]=(N-a^2)^2.
\]
The term $\E P_a(X_i)$ is nonnegative by~\eqref{eq:arith-congruences}.
By the choice of $a$, every $Y_i$ also satisfies
\[
(Y_i^2-N)^2\ge(a^2-N)^2.
\]
Equality must therefore hold in this last inequality for every $i$.
Since the closest square is unique, this implies $Y_i^2=a^2$ for every $i$.
Taking means gives $N=\E Y_i^2=a^2$, proving both conclusions.
\end{proof}

For even $n\ge4$, the following proposition also follows from
Thornburgh's nonlinearity bound~\cite[Corollary~4.3]{Thornburgh26}.
The proof below applies to both even and odd dimensions.

\begin{proposition}\label{prop:weight-two}
No nonzero component of an APN function $F\colon \F_2^n\to \F_2^n$ differs from an affine function at exactly two inputs.
\end{proposition}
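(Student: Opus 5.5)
Translate inputs and apply an affine addition so that $F=(\ind_E,H)$ with $E=\{0,h_0\}$ and $h_0\ne0$; this is allowed by the reductions at the start of this section. Then $D_{h_0}g=0$, so Lemma~\ref{lem:derivative}(1) gives $\sigma=\sum_xH(x)=0$. Since $g$ has even weight, every component $b'\cdot F$ then has even weight. As $N\ge16$, every Walsh coefficient of every component of $F$ is therefore divisible by $4$.

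Take $A$ to be the Sidon set~\eqref{eq:partialgraph}, with $|A|=N-2$ and $G=\F_2^n\times U$. For the character $\chi=\chi_{a,b}$, write $t_1=(-1)^{b\cdot H(0)}$ and $t_2=(-1)^{a\cdot h_0+b\cdot H(h_0)}$. Then the component indexed by $(0,b)$ has Walsh coefficient $\alpha_\chi+t_1+t_2$ at $a$, and the one indexed by $(1,b)$ has coefficient $\alpha_\chi-t_1-t_2$. Hence $\alpha_\chi\equiv-(t_1+t_2)\pmod 4$. The characters therefore split into two classes:
\begin{itemize}
\item an \emph{$X$-class}, where $t_1=t_2$ and $\alpha_\chi\in4\Z+2$;
\item a \emph{$Y$-class}, where $t_1\ne t_2$ and $\alpha_\chi\in4\Z$.
\end{itemize}
The $Y$-class is the affine hyperplane $\chi(s)=-1$, where $s=(h_0,H(0)+H(h_0))$. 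By APN, $D_{h_0}H$ is injective on the pairs $\{x,x+h_0\}$, so the only pair realizing $s$ is $\{0,h_0\}$, which was deleted. Thus $A\cap(A+s)=\emptyset$, and \eqref{eq:shifted-second-moment} together with $\sum_\chi\alpha_\chi^2=|G||A|$ shows that both classes have second moment $N^2(N-2)/4$.

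Next I discard the $N$ characters with $b=0$, where $\alpha$ is explicit:
\begin{itemize}
\item $a=0$ gives $\alpha=N-2$;
\item $a\ne0$ with $a\cdot h_0=0$ gives $\alpha=-2$ (these lie in the $X$-class);
\item $a\cdot h_0=1$ gives $\alpha=0$ (these lie in the $Y$-class).
\end{itemize}
Each class keeps $p=N^2/4-N/2$ characters, and the means become $\E X_i^2=N-4$ and $\E Y_i^2=N$, exactly as in~\eqref{eq:arith-second}. To obtain~\eqref{eq:arith-fourth}, apply~\eqref{eq:sidon-fourth} to $A$, subtract the explicit fourth powers $(N-2)^4+16(N/2-1)$ of the discarded characters, and pair each $X$-character with a $Y$-character. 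Here I expect $\E(X^4+Y^4)=2N^2+8N-48$; this routine but error-prone computation is the first thing I would verify. Lemma~\ref{lem:arithmetic} then shows that $N$ is a square, which already contradicts $n$ odd. When $n$ is even, it also gives $|\alpha_\chi|=\sqrt N$ on every remaining $Y$-character.

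The remaining even case is the main obstacle. Equality in the proof of Lemma~\ref{lem:arithmetic} additionally forces $P_a(X_i)=0$, that is, $X_i\in\{\pm(\sqrt N\pm2)\}$ for all remaining $X$-characters. So both restricted spectra are completely rigid. I would then derive a contradiction in one of two ways:
\begin{itemize}
\item Feed these values into the third moment~\eqref{eq:character-moments} with $j=3$, or into another shifted identity~\eqref{eq:shifted-second-moment}, and check that the value counts are inconsistent (for instance, that they give a non-integral number of characters).
\item Translate back: the $Y$-values force the components with $t_1\ne t_2$ to have Walsh values in $\{\pm\sqrt N\pm2,\pm\sqrt N\}$. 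A counting or integrality argument in the style of Lemma~\ref{lem:bent}(2) on the character sums over a fixed $b$ then contradicts the Sidon property.
\end{itemize}
I would try the third-moment count first.
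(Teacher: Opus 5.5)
Everything through the application of Lemma~\ref{lem:arithmetic} is correct and follows the paper's proof. Your derivation of the mod-$4$ congruences from the even weight of all components (via $\sum_xF(x)=0$) is a valid substitute for the paper's identity $\prod_{z\in A}\chi(z)=\chi(s)$ with $|A|\equiv2\pmod 4$. The value $2N^2+8N-48$ also checks out, so the odd case is done. The genuine gap is the even case: you only list possible strategies, and neither works as described.

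The third moment is $\sum_\chi\alpha_\chi^3=|G|\cdot6\tau$, where $\tau$ is the number of three-element zero-sum subsets of $A$. The Sidon property does not control $\tau$, so substituting the rigid values only yields one relation containing a free integer. In fact, the first four moments are all met by a rigid distribution: take $\tau=0$, split the $Y_i$ evenly between $\pm\sqrt N$, and put every $X_i$ in $\{\sqrt N-2,-\sqrt N-2\}$ in the proportions forced by $\sum_iX_i=0$. So these moment identities alone cannot give a contradiction.

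Your second route also gives nothing if ``over a fixed $b$'' means summing over input masks $a$ for a fixed output mask $b\ne0$. The $Y$-characters with that $b$ are the $N/2$ masks $a$ with $a\cdot h_0$ fixed. Orthogonality in $a$ shows their values sum to $0$: only $x\in\{0,h_0\}$ could survive, and these are excluded from $A$. An even number of terms $\pm\sqrt N$ can sum to $0$, so there is no contradiction.

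The missing step is to normalize first. By an affine addition to $H$, assume $H(0)=H(h_0)=0$, so that $s=(h_0,0)$. Fix an input mask $a$ with $a\cdot h_0=1$. Then the entire fiber $\{(a,c):c\in U\}$ lies in the $Y$-class, with $\alpha_{\chi_{a,0}}=0$ and $\alpha_{\chi_{a,c}}=\pm\sqrt N$ for the $N/2-1$ values $c\ne0$, which is an odd number. Orthogonality in $c$ gives
\[
\sum_{c\ne0}\alpha_{\chi_{a,c}}
=\frac N2\sum_{\substack{x\notin\{0,h_0\}\\ H(x)=0}}(-1)^{a\cdot x}.
\]
The left-hand side is $\sqrt N$ times an odd integer. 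The right-hand side is a multiple of $N/2=\sqrt N\cdot2^{n/2-1}$, and $2^{n/2-1}$ is even for even $n\ge4$; this is the contradiction. So an integrality argument in the spirit of Lemma~\ref{lem:bent}(2) does work. However, it must sum over output masks at a fixed input mask, and it needs the normalization of $H$ so that the whole fiber over $a$ lies in the $Y$-class.
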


\begin{proof}
Suppose otherwise. Without loss of generality, write $F=(g,H)$ with
$g=\ind_{\{0,d\}}$ for some $d\ne0$. By an affine addition to $H$ one
can force $H(0)=H(d)=0$.
Since $D_dg=0$, Lemma~\ref{lem:derivative} gives $\sum_xH(x)=0$.
Set
\[
A=\{(x,H(x)):x\notin\{0,d\}\}\subseteq G,
\qquad |A|=N-2,\qquad s=(d,0).
\]
The set $A$ is Sidon by~\eqref{eq:partialgraph}, and
\begin{equation}\label{eq:two-sum}
\sum_{z\in A}z=s.
\end{equation}
Moreover, $s$ is not the sum
of two distinct elements of $A$. Indeed, if
$(x,H(x))+(y,H(y))=(d,0)$ for distinct $x,y\notin\{0,d\}$,
then $g(x)=g(y)=0$. The corresponding points
$(x,0,H(x))$ and $(y,0,H(y))$ on the graph of $F$ would have sum $(d,0,0)$, which is
also the sum of the deleted graph points $(0,1,0)$ and $(d,1,0)$.
These four points are distinct, contradicting the Sidon property
of $\Gamma_F$.

For any character $\chi \in \widehat{G}$, Equation~\eqref{eq:two-sum}
implies
\[
\prod_{z\in A}\chi(z)=\chi(s).
\]
Since $|A|=N-2\equiv2\pmod4$, counting the negative signs gives 
\begin{equation}\label{eq:two-congruences}
\alpha_\chi\equiv
\begin{cases}
2\pmod4,&\chi(s)=1,\\
0\pmod4,&\chi(s)=-1.
\end{cases}
\end{equation}
There are $|G|/2$ characters of each type. Identity~(\ref{eq:shifted-second-moment}) and the
absence of a pair summing to $s$ give
\[
\sum_\chi\chi(s)\alpha_\chi^2
 =|G| \cdot \bigl|\{(z_1,z_2)\in A^2:z_1+z_2=s\}\bigr|=0.
\]
Combining this with the second moment in~\eqref{eq:character-moments},
this yields
\begin{equation}\label{eq:two-halves}
\sum_{\chi(s)=1}\alpha_\chi^2
 =\sum_{\chi(s)=-1}\alpha_\chi^2=\frac{|G||A|}2.
\end{equation}

Write the characters as
$\chi_{a,c}(x,y)=(-1)^{a\cdot x+c\cdot y}$, where $a\in \F_2^n$
and $c\in U$. Now consider only those with $c=0$. Their corresponding sums $\alpha_{\chi_{a,0}}$ are
\[
\alpha_{\chi_{a,0}}=N\ind_{\{0\}}(a)-1-(-1)^{a\cdot d}
\]

and they occur with the following values and multiplicities:
\begin{center}
\renewcommand{\arraystretch}{1.2}
\begin{tabular}{@{}cccc@{}}
\toprule
Condition on $a$ & Multiplicity & $\chi_{a,0}(s)$ & $\alpha_{\chi_{a,0}}$\\
\midrule
$a=0$ & $1$ & $1$ & $|A|$\\
$a\ne0,\ a\cdot d=0$ & $N/2-1$ & $1$ & $-2$\\
$a\cdot d=1$ & $N/2$ & $-1$ & $0$\\
\bottomrule
\end{tabular}
\end{center}
Among the characters with $c \neq 0$, each of the two
classes $\chi(s)=1$ and $\chi(s)=-1$ contains exactly
\[
p=\frac N2\left(\frac N2-1\right)
 =\frac{|G|-N}{2}=\frac{N(N-2)}4
\]
characters: there are $N/2-1$ nonzero choices of $c\in U$,
and for each such $c$, exactly $N/2$ choices of $a$ give
each sign of $\chi_{a,c}(s)=(-1)^{a\cdot d}$.
List their values of $\alpha_\chi$ as $X_i$ for $\chi(s)=1$ and as $Y_i$
for $\chi(s)=-1$. Equation~\eqref{eq:two-congruences} gives
\[
X_i\in4\Z+2,\qquad Y_i\in4\Z.
\]
By~\eqref{eq:two-halves}, the sum of the squares over all
characters in each of the two classes is $|G||A|/2$.
In the class $\chi(s)=1$, the removed characters with $c=0$
correspond to values $|A|$ once and $-2$ exactly $N/2-1$ times.
Their total squared contribution is therefore
$|A|^2+4(N/2-1)$. 
In the class $\chi(s)=-1$, all removed characters have value
zero.
Consequently,
\[
\E X_i^2
 =\frac{|G||A|/2-|A|^2-4(N/2-1)}p=N-4,\qquad
\E Y_i^2=\frac{|G||A|}{2p}=N.
\]

Similarly, subtracting the fourth powers of the removed values
from~\eqref{eq:sidon-fourth} gives
\[
\E(X_i^4+Y_i^4)
 =\frac{|G|(3|A|^2-2|A|)-|A|^4-16(N/2-1)}p
 =2N^2+8N-48.
\]
Lemma~\ref{lem:arithmetic} shows that $N$ is a square and that
every character $\chi_{a,c}$ with $c \neq 0$ and $\chi_{a,c}(s)=-1$ satisfies
$\alpha_{\chi_{a,c}}^2=N$. This already yields a contradiction when $n$ is odd.

For even $n$, fix $a\in \F_2^n$ with $a\cdot d=1$. The deleted inputs $(0,H(0))$ and $(d,H(d))$
contribute opposite signs, since $H(0)=H(d)=0$, so, for every $c\in U$, we have
\[
\alpha_{\chi_{a,c}}=\sum_{x\in \F_2^n}(-1)^{a\cdot x+c\cdot H(x)}.
\]
The value for $c=0$ is zero. For $c\ne0$, we have
$\chi_{a,c}(s)=(-1)^{a\cdot d}=-1$, so
$\alpha_{\chi_{a,c}}$ is one of the values $Y_i$.
Therefore,
$\alpha_{\chi_{a,c}}\in\{\sqrt N,-\sqrt N\}$. Character orthogonality now yields
\begin{equation}\label{eq:two-parity}
\sum_{c\ne0}\alpha_{\chi_{a,c}}
 =\sum_{c\in U}\sum_{x\in \F_2^n}(-1)^{a\cdot x+c\cdot H(x)}
 =\frac N2\sum_{x:H(x)=0}(-1)^{a\cdot x}.
\end{equation}
The left-hand side is $\sqrt N$ times an odd integer, since $|U|-1=N/2-1$
is odd. The right-hand side is divisible by $N/2$, whereas
\[
\frac{N/2}{\sqrt N}=2^{n/2-1}
\]
is even for even $n\ge4$. Dividing~\eqref{eq:two-parity} by
$\sqrt N$ thus yields an odd integer being equal to an even integer, a
contradiction.
\end{proof}

\subsection{Small support in even dimension}

Assume now that $n\ge6$ is even and write $v=\sqrt N\in8\Z$.
The key observation is that the character sums, divided by four and
reduced modulo two, form a quadratic Boolean function on a hyperplane
of the character space. The moment bound forces this function to have
minimum weight. The final contradiction uses character orthogonality on suitably
chosen disjoint cosets and a parity argument forces more exceptional
character sums than the moment bound permits.

As in Lemma~\ref{lem:arithmetic}, the argument uses nonnegative polynomials. Use the polynomial $P_v$ from the proof of Lemma~\ref{lem:arithmetic}
and set
\[
Q_v(T)=(T^2-N)(T^2-N-32).
\]
Recall that $P_v(t)\ge0$ for $t\in4\Z+2$.

\begin{lemma}\label{lem:four-arithmetic}
For $t\in4\Z$, we have
\begin{equation}\label{eq:four-Q}
Q_v(t)\ge64(N-4)\ind_{\{t\equiv4\pmod8\}}
          +64N\ind_{\{t\in8\Z,\ t^2\ne N\}}.
\end{equation}
\end{lemma}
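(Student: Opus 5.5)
The plan is to treat the two residue classes of $t\in4\Z$ modulo $8$ separately, rescaling in each so that the estimate becomes an elementary inequality between integers. If $t\equiv4\pmod 8$, only the first indicator is active. If $t\in8\Z$, only the second is active. If $t\in 8\Z$ and $t^2=N$, the right-hand side of~\eqref{eq:four-Q} is zero, and so is $Q_v(t)$, because its factor $t^2-N$ vanishes. So only two genuine cases remain.

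\emph{Case $t\equiv4\pmod8$.} I would write $t=2s$ and $v=2m$, so that $s\in4\Z+2$ and $m\in4\Z$, using $v\in8\Z$. Then
\[
Q_v(2s)=16\,(s^2-m^2)(s^2-m^2-8).
\]
Next I expand the polynomial $P_m$ from the proof of Lemma~\ref{lem:arithmetic}:
\[
P_m(s)=\bigl((s+m)^2-4\bigr)\bigl((s-m)^2-4\bigr)=(s^2-m^2)^2-8(s^2+m^2)+16.
\]
Comparing the two expressions gives
\[
Q_v(2s)/16=P_m(s)+16m^2-16=P_m(s)+4N-16,
\]
using $m^2=N/4$. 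Hence
\[
Q_v(t)=16P_m(s)+64(N-4).
\]
Since $s\in4\Z+2$ and $m\in4\Z$, both $s+m$ and $s-m$ lie in $4\Z+2$. Therefore each factor of $P_m(s)$ is nonnegative, exactly as recalled for $P_v$, and the claimed bound follows.

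\emph{Case $t\in8\Z$ with $t^2\ne N$.} I would write $t=8r$, $v=8w$ with $w\ge1$, since $n\ge6$, and set $D=r^2-w^2$, a nonzero integer. Then
\[
Q_v(t)=64D\,(64D-32)=2048\,D(2D-1),
\]
while $64N=4096w^2$. So it suffices to show $D(2D-1)\ge 2w^2$. By the symmetry $t\mapsto-t$ we may take $r\ge0$. If $D>0$, then $r\ge w+1$, so $D\ge 2w+1$ and $D(2D-1)\ge(2w+1)(4w+1)>2w^2$. If $D<0$, then $r\le w-1$, so $|D|\ge 2w-1\ge1$ and
\[
D(2D-1)=|D|(2|D|+1)\ge(2w-1)(4w-1)=8w^2-6w+1\ge 2w^2
\]
for $w\ge1$.

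The only step I expect to need care is the first case. It hinges on spotting the identity $Q_v(t)=16P_{v/2}(t/2)+64(N-4)$, which reduces the bound to the nonnegativity of $P$ on $4\Z+2$ already used in Lemma~\ref{lem:arithmetic}. Once this identity is in hand, both cases are routine algebra.
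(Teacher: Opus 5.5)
Your proof is correct and follows the paper's argument. In the first case, your identity $Q_v(t)=16P_{v/2}(t/2)+64(N-4)$ is exactly the paper's factorization $Q_v(t)-64(N-4)=\bigl(t^2-(v-4)^2\bigr)\bigl(t^2-(v+4)^2\bigr)$, because $16P_{v/2}(t/2)=\bigl((t+v)^2-16\bigr)\bigl((t-v)^2-16\bigr)$. In the second case, the rescaling $t=8r$, $v=8w$ reproduces the paper's split, with $D<0$ and $D>0$ corresponding to the subcases $|t|\le v-8$ and $|t|\ge v+8$.
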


\begin{proof}
If $t\equiv4\pmod8$, then $|t|\le v-4$ or $|t|\ge v+4$, so
\[
Q_v(t)-64(N-4)
=\bigl(t^2-(v-4)^2\bigr)\bigl(t^2-(v+4)^2\bigr)\ge0.
\]
If $t\in8\Z$, then $Q_v(t)=0$ when $|t|=v$.
Otherwise, either $|t|\le v-8$, in which case
$N-t^2\ge16v-64\ge8v$, or $|t|\ge v+8$, in which case
$t^2-N-32\ge16v+32\ge8v$.
In both cases the two factors defining $Q_v(t)$ have the same sign
and absolute value at least $8v$, giving $Q_v(t)\ge64N$.
\end{proof}

The argument also uses the following well-known consequence of the classification of quadratic Boolean functions; see~\cite[Theorem~8 and pp.\@ 171-172]{CarletBook}:
A non-constant quadratic Boolean function on a binary vector space $V$
of dimension $m$ at least two is either balanced or has weight $|V|(1/2\pm2^{-s-1})$, $1 \le s\le \lfloor m/2 \rfloor$; the smallest is $|V|/4$,
attained precisely by indicators of affine subspaces of codimension $2$.

A similar use of an auxiliary quadratic Boolean function and its
value distribution occurs in the study of subspaces of Kloosterman
zeros~\cite[Section~3]{GologluEtAl21}.

\begin{proposition}\label{prop:weight-four}
Let $n \geq 6$ be even and let $E\subseteq\F_2^n$ have size four.
\begin{enumerate}
\item If $E$ is affinely independent, no map
$H\colon\F_2^n\setminus E\to U$ has a Sidon graph.
\item If $E$ is an affine plane, no map
$(\ind_E,H)\colon\F_2^n\to\F_2\times U$ is APN.
\end{enumerate}
Consequently, every nonzero component of an APN function
$F\colon\F_2^n\to\F_2^n$ has distance at least five from every
affine Boolean function.
\end{proposition}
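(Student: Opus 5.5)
The plan is to run the moment method of Proposition~\ref{prop:weight-two} one level deeper. Working modulo~$8$ rather than modulo~$4$ exposes a quadratic Boolean function, and the arithmetic Lemmas~\ref{lem:arithmetic} and~\ref{lem:four-arithmetic} control its weight.

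\emph{Consequence first.} Distances $0,1,2$ are already excluded by Lemma~\ref{lem:bent}(3), Lemma~\ref{lem:derivative}(2) and Proposition~\ref{prop:weight-two}. So write $F=(\ind_E,H)$ as in Section~\ref{sec:small}, with $|E|\in\{3,4\}$; by~\eqref{eq:partialgraph} the set $\{(x,H(x)):x\notin E\}$ is Sidon.
\begin{itemize}
\item If $|E|=3$, its points are affinely independent. Adjoin a fourth point outside the affine plane they span; deleting a point from a Sidon set keeps it Sidon, so part~(1) applies.
\item If $|E|=4$, either $E$ is affinely independent (part~(1)) or $E$ is an affine plane (part~(2)).
\end{itemize}

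\emph{Part (1): the mod-$4$ and mod-$8$ structure.} Set $A=\{(x,H(x)):x\notin E\}\subseteq G=\F_2^n\times U$, so $|A|=N-4\equiv0\pmod 8$. Let $\sigma_A=\sum_{z\in A}z$. For a character $\chi$, let $m(\chi)$ be the number of $z\in A$ with $\chi(z)=-1$, so $\alpha_\chi=|A|-2m(\chi)$.
\begin{itemize}
\item Since $\prod_{z\in A}\chi(z)=\chi(\sigma_A)$, we get $\alpha_\chi\in4\Z$ when $\chi(\sigma_A)=1$ and $\alpha_\chi\in4\Z+2$ when $\chi(\sigma_A)=-1$.
\item On the hyperplane $\{\chi(\sigma_A)=1\}$ of the character space, $m(\chi)$ is even, so $\alpha_\chi/4\bmod 2$ equals $\binom{m(\chi)}2\bmod2$ up to a constant.
\item This is the quadratic form $\sum_{\{z,z'\}}[\chi(z)=-1][\chi(z')=-1]$ in the coordinates of $\chi$, so $q(\chi):=\alpha_\chi/4\bmod 2$ is a quadratic Boolean function on that hyperplane.
\end{itemize}

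\emph{Part (1): the moment bound.} I would remove the characters $\chi_{a,0}$, whose sums $N\ind_{\{0\}}(a)-\sum_{e\in E}(-1)^{a\cdot e}$ are explicit. Because $E$ is affinely independent, the four signs $(-1)^{a\cdot e}$ are roughly equidistributed, and their exact distribution gives the removed second and fourth moments. Then compare the second moment from~\eqref{eq:character-moments} and the Sidon fourth moment~\eqref{eq:sidon-fourth} with the sum of $P_v(\alpha_\chi)$ over the $4\Z+2$ class plus $Q_v(\alpha_\chi)$ over the $4\Z$ class.
\begin{itemize}
\item I expect this to leave a small explicit constant.
\item Lemma~\ref{lem:four-arithmetic} then bounds the number of characters with $\alpha_\chi\equiv4\pmod8$, and also those with $\alpha_\chi\in8\Z$ but $\alpha_\chi^2\neq N$, by about $|V|/4$.
\item By the classification of quadratic functions recalled above, $q$ (or its complement) must have weight exactly $|V|/4$ and be the indicator of a codimension-$2$ affine subspace. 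Every other character in the $4\Z$ class then has $\alpha_\chi=\pm v$.
\end{itemize}

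\emph{Part (1): the contradiction, and Part (2).} As in the end of Proposition~\ref{prop:weight-two}, choose an $a$ such that the whole coset $\{\chi_{a,c}:c\in U\}$ lies in the class where $\alpha=\pm v$, or meets the exceptional subspace in a controlled number of points. Summing over $c$ and using orthogonality gives
\[
\sum_c\alpha_{\chi_{a,c}}=\tfrac N2\sum_{x\notin E,\,H(x)=0}(-1)^{a\cdot x},
\]
which is divisible by $N/2$. On the other side, an odd number of terms equal to $\pm v$, plus a few exceptional terms, gives the wrong $2$-adic valuation since $v\in8\Z$. Choosing several disjoint cosets should force more exceptional characters than the moment bound allows.

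For part~(2), with $E=\{e_0,e_0+d_1,e_0+d_2,e_0+d_1+d_2\}$, we have $D_{d_i}\ind_E=0$ for the three nonzero directions $d$ of the plane. Lemma~\ref{lem:derivative}(1) gives $\sum_xH(x)=0$, and after an affine normalization $\sigma_A$ becomes explicit. Exactly as in Proposition~\ref{prop:weight-two}, the Sidon property of $\Gamma_F$ forbids the shifts $(d,0)$ from being sums of two points of $A$. This yields extra vanishing shifted second moments~\eqref{eq:shifted-second-moment}, and the same mod-$8$/quadratic-weight argument then runs with these constraints replacing affine independence.

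The main obstacle I expect is the exact bookkeeping of the constant left after the $P_v,Q_v$ subtraction. It must land precisely at the minimal quadratic weight, so that the classification forces an affine codimension-$2$ subspace. The coset/parity step then has to be arranged so that this subspace cannot absorb the parity defect.
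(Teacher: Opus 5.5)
Your architecture matches the paper's: the split of characters by $\chi(\delta)$ into a $4\Z$ and a $4\Z+2$ class, the quadratic function $r=\alpha_\chi/4 \bmod 2$ on $V=\langle\delta\rangle^\perp$, the $P_v/Q_v$ moment inequality, the classification forcing $r=\ind_K$ with $K$ of codimension two, and a final $2$-adic coset count. The gap is in the final step.

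First, the moment inequality does not make every remaining character of the $4\Z$ class equal to $\pm v$. After removing the $c=0$ characters, the slack is $24N(N-h)-6N^2J$ with $J=|A\cap(A+\delta)|\ge0$. You need this shifted second moment anyway, since $P_v$ and $Q_v$ have different quadratic coefficients. For affinely independent $E$ we have $h=N/2$, so the slack allows up to $M\le 3N/16$ exceptional characters with $r=0$ and $\alpha^2\ne N$. You therefore need more than $3N/16$ disjoint cosets, each provably disjoint from $K$, of size divisible by $2v$, and meeting $T$ in a single character with $\alpha=0$.

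Your vertical fibres $\{\chi_{a,c}:c\in U\}$ do not provide this.
\begin{enumerate}
\item In part~(1), even after normalizing $\delta=(d,0)$, $K$ has dimension $2n-4$ inside $V$ of dimension $2n-2$. Hence $K\cap(\{0\}\times U)$ has dimension at least $n-3$. A fibre meeting $K$ then contains at least $N/8$ characters with $\alpha\equiv4\pmod 8$ but otherwise unknown values, which ruins the congruence modulo $2v$. Nothing excludes $K+(\{0\}\times U)=V$, in which case every fibre inside $V$ meets $K$.
\item In part~(2), $\delta=(0,\sum_{x\in E}H(x))$ has a nonzero second coordinate that no affine addition removes. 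So every fibre is split evenly between $V$ and the $4\Z+2$ class, and its sum carries no information modulo $2v$.
\end{enumerate}

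The missing idea is to adapt the cosets to $K$. Translate so that $0\in E$, and put $D=\langle E\rangle$, $T=V\cap(\F_2^n\times\{0\})$ and $T_0=D^\perp\times\{0\}$. The explicit values of $\alpha_{\chi_{a,0}}$ give $r|_T=\ind_{T_0}$. This shows that $r$ is nonaffine with $r(0)=1$, which is what the classification needs to force $w=|V|/4$ (rather than your ``or its complement''). It also gives $K\cap T=T_0$.

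Now choose $L\le K$ with $K=T_0\oplus L$. For $\tau\in T\setminus T_0$, the cosets $\tau+L$ are pairwise disjoint and disjoint from $K$. Each meets $T$ only at $\tau$, where $\alpha_{\chi_\tau}=0$, so its other $|L|-1$ elements lie in $V\setminus T$ with $r=0$. Since $\dim L\ge n-2\ge n/2+1$, we get $2v\mid|L|$. Orthogonality gives $\sum_{\xi\in\tau+L}\alpha_{\chi_\xi}\in|L|\Z$, so not all $|L|-1$ remaining terms can be $\pm v$. Hence each of the $3h/4\ge3N/8$ cosets contains a character counted by $M$, contradicting $M\le3N/16$.
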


\begin{proof}
Suppose on the contrary that one of the two maps in statement (1) or (2) exists. Translate the input so that, without loss of generality, 
$0\in E$. Let $D=\langle E\rangle$, and write
\[
A=\{(x,H(x)):x\notin E\}\subseteq G=\F_2^n\times U,\qquad
\delta=\sum_{z\in A}z.
\]
The set $A$ is Sidon and $|A|=N-4$. We also have $\delta\ne0$. Indeed, 
in case (1), its first coordinate is $d=\sum_{x\in E}x\ne0$;
in case (2), $E=D$ and Lemma~\ref{lem:derivative}(1) gives
$\sum_xH(x)=0$, so $\delta=(0,\sum_{x\in E}H(x))\ne0$ by the
APN condition on the four points of the function graph over $E$.

Write the characters of $G$ as $\chi_{a,c}(x,y) = (-1)^{a \cdot x + c \cdot y}$, where $a \in \F_2^n$ and $c \in U$. Let
\[
V=\langle \delta \rangle ^\perp,\qquad
T=V\cap(\F_2^n\times\{0\}),\qquad
T_0=D^\perp\times\{0\},\qquad h=|T|.
\]
In case (1), we have $T=\langle d \rangle ^\perp\times\{0\}$ and $\dim D=3$.
In case (2), we have $T=\F_2^n\times\{0\}$ and $\dim D=2$.
Consequently,
\begin{equation}\label{eq:four-spaces}
|V|=N^2/4,\qquad h\in\{N/2,N\},\qquad
T_0\le T,\qquad |T_0|=h/4.
\end{equation}
The character sums with $c = 0$ over $A$ are
\begin{equation}\label{eq:four-input}
\alpha_{\chi_{a,0}} =
\begin{cases}
N-4,&a=0,\\
-4,&(a,0)\in T_0\setminus\{0\},\\
0,&(a,0)\in T\setminus T_0,\\
\pm2,&(a,0)\notin T.
\end{cases}
\end{equation}

The identity $\prod_{z\in A}\chi(z)=\chi(\delta)$ for each character $\chi \in \widehat{G}$
gives $\alpha_{\chi_{a,c}}\in4\Z$ for $(a,c) \in V$ and $\alpha_{\chi_{a,c}}\in4\Z+2$ for $(a,c) \notin V$.
On $V$, define
\[
r(\xi)=\alpha_{\chi_\xi}/4\pmod2.
\]
This is a quadratic Boolean function on $V$. Indeed, if
$k_\xi=|\{z\in A:\xi\cdot z=1\}|$, then $k_\xi$ is even on $V$
and $\alpha_{\chi_\xi}=N-4-2k_\xi$. Hence, in $\F_2$,
\[
r(\xi)=1+\binom{k_\xi}{2}
      =1+\sum_{\{z,z'\}\in\binom{A}{2}}(\xi\cdot z)(\xi\cdot z').
\]
Here $\binom{A}{2}$ denotes the set of unordered pairs of distinct
elements of $A$. From~(\ref{eq:four-input}), we have $r(0)=1$ and $r|_T=\ind_{T_0}$.
Let $w=\wt(r)$ and let $M$ count the indices
$\xi\in V\setminus T$ with $r(\xi)=0$ and $\alpha_{\chi_{\xi}}^2\ne N$.

Write
\[
S=\sum_{\xi\in V\setminus T}Q_v(\alpha_{\chi_{\xi}})
 +\sum_{\substack{\xi=(a,c)\notin V\\c\ne0}}P_v(\alpha_{\chi_\xi}).
\]
The moment
identities~\eqref{eq:character-moments}--\eqref{eq:sidon-fourth} give,
with $J=|A\cap(A+\delta)|$,
\[
\sum_\xi\alpha_{\chi_\xi}^2=|G| |A|,\qquad
\sum_\xi\alpha_{\chi_\xi}^4=|G|(3|A|^2-2|A|),\qquad
\sum_\xi(-1)^{\xi\cdot\delta}\alpha_{\chi_\xi}^2=|G|J.
\]
Since $(-1)^{\xi\cdot\delta}$ equals $1$ on $V$ and $-1$ outside
$V$, the second-moment identities give
\[
\sum_{\xi\in V}\alpha_{\chi_\xi}^2
=\frac{N^2}{4}(N-4+J),
\qquad
\sum_{\xi\notin V}\alpha_{\chi_\xi}^2
=\frac{N^2}{4}(N-4-J).
\]
Expanding $P_v,Q_v$ and using the fourth-moment identity and
$|V|=|G\setminus V|=N^2/4$, one obtains
\[
\sum_{\xi\in V}Q_v(\alpha_{\chi_\xi})
+\sum_{\xi\notin V}P_v(\alpha_{\chi_\xi})
=N^4-13N^3+72N^2-6N^2J.
\]
To recover $S$, subtract the terms with $c=0$.
The four classes in~\eqref{eq:four-input} have sizes
$1$, $h/4-1$, $3h/4$, and $N-h$, respectively.
Since both polynomials are even, this gives
\begin{align*}
S={}&N^4-13N^3+72N^2-6N^2J\\
&-Q_v(N-4)-\left(\frac h4-1\right)Q_v(4)
-\frac{3h}{4}Q_v(0)-(N-h)P_v(2)\\
={}&4(N-4)(N^2-4h)+24N(N-h)-6N^2J.
\end{align*}
On the other hand, Lemma~\ref{lem:four-arithmetic} and
$P_v\ge0$ on $4\Z+2$ give
$S\ge64(N-4)(w-h/4)+64NM$.
Since $J\ge0$ and $h\ge N/2$, one obtains
\begin{equation}\label{eq:four-moment-bound}
64(N-4)\left(w-\frac{|V|}{4}\right)+64NM\le12N^2.
\end{equation}

Since $r|_T=\ind_{T_0}$ and $T_0$ has codimension two in $T$,
the quadratic function $r$ is nonaffine.
By the classification of quadratic Boolean functions, we therefore have
\[
w\in
\left\{\frac{|V|}{2}\right\}
\cup
\left\{
|V|\left(\frac12\pm2^{-j-1}\right):
1\le j\le n-1
\right\},
\]
where $\dim V=2n-2$.
The only possible weight below $3|V|/8$ is therefore $|V|/4$.
If $w\ge3|V|/8$, the left-hand side
of~\eqref{eq:four-moment-bound} would be at least
\[
64(N-4)\frac{|V|}{8}
=2(N-4)N^2>12N^2,
\]
a contradiction. Hence $w=|V|/4$.

Thus, $r$ is the indicator of an affine subspace $K$ of
codimension two in $V$. Since $r(0)=1$, we have $0 \in K$, so $K$ is a linear subspace.
Substituting $w=|V|/4$ into~\eqref{eq:four-moment-bound} gives
\begin{equation}\label{eq:four-exceptions}
M\le3N/16.
\end{equation}

Since $K\cap T=T_0$, choose $L\le K$ with $K=T_0\oplus L$.
Then $T\cap L=T_0\cap L=\{0\}$, and
\[
\dim L=\dim K-\dim T_0
\ge (2n-4)-(n-2)=n-2 \geq n/2+1,\]
which gives $2v \mid |L|$.
For each $\tau\in T\setminus T_0$, the coset $\tau+L$ is disjoint
from $K$ and meets $T$ only at $\tau$, where $\alpha_{\chi_\tau}=0$.
Character orthogonality gives
\[
\sum_{\xi\in\tau+L}\alpha_{\chi_\xi}
=|L|\sum_{z\in A\cap L^\perp}(-1)^{\tau\cdot z}
\in |L|\Z.
\]
If all $|L|-1$ character sums $\alpha_{\chi_\xi}$ for $\xi \in (\tau + L) \setminus \{\tau\}$ were $\pm v$,
their sum $\sum_{\xi\in\tau+L}\alpha_{\chi_\xi}$ would be $v$ modulo $2v$, a contradiction.
Thus, each of these $|T\setminus T_0|=3h/4$ disjoint cosets of $L$ contains
an index counted by $M$. Hence $M\ge3h/4\ge3N/8$, contradicting
\eqref{eq:four-exceptions}.

Finally, a component at distance at most four from an affine function
gives an APN map $(\ind_E,H)$ with $|E|\le4$.
If $|E|\le3$, extend $E$ to four affinely independent points $E'$.
The graph of $H$ outside $E'$ is Sidon, contradicting (1).
If $|E|=4$, either (1) or (2) applies. This proves the final claim.
\end{proof}

\subsection{The general bounds}

\begin{proof}[Proof of Theorem~\ref{thm:small}]
Let $t=\NL(F)=(N-\Lin(F))/2$, which is the minimum distance of a nonzero
component from an affine function. Lemma~\ref{lem:bent} excludes $t=0$,
and the argument in Section~\ref{subsec:derivatives} excludes $t=1$.
Proposition~\ref{prop:weight-two} excludes $t=2$, yielding
$\Lin(F)\le N-6$.
If $n$ is even, then $n\ge6$, and Proposition~\ref{prop:weight-four}
gives $t\ge5$. Thus $\Lin(F)\le N-10$ in this case.
\end{proof}

\begin{corollary}\label{cor:parity}
Let $F\colon\F_2^n\to\F_2^n$ be APN with $n \geq 5$. If
$\sum_{x\in\F_2^n}F(x)=0$, then
\[
\Lin(F)\le
\begin{cases}
N-12,&n\text{ even},\\
N-8,&n\text{ odd}.
\end{cases}
\]
In particular, these bounds hold for every APN permutation.
\end{corollary}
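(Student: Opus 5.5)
The plan is a parity refinement of Theorem~\ref{thm:small}. As in its proof, write $t=\NL(F)=(N-\Lin(F))/2$, the minimum distance of a nonzero component from an affine function. Theorem~\ref{thm:small} already gives $t\ge3$ for odd $n$ and $t\ge5$ for even $n$. The goal is to show that, when $\sum_xF(x)=0$, the number $t$ is even. This immediately upgrades the bounds to $t\ge4$ and $t\ge6$, that is, $\Lin(F)\le N-8$ and $\Lin(F)\le N-12$.

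For the parity claim, fix $b\ne0$ and an affine function $\ell(x)=a\cdot x+c$. I would first show that every affine Boolean function on $\F_2^n$ with $n\ge2$ has even weight. If $a\ne0$, the weight is $N/2$. If $a=0$, the weight is $0$ or $N$. Next, the weight of $f_b$ has parity
\[
\wt(f_b)\equiv\sum_x b\cdot F(x)=b\cdot\sum_xF(x)=0\pmod 2.
\]
Since $d_H(f_b,\ell)=\wt(f_b+\ell)\equiv\wt(f_b)+\wt(\ell)\pmod2$, every distance from a nonzero component to an affine function is even. In particular $t$ is even, which gives the first claim.

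For permutations, $\sum_xF(x)=\sum_{y\in\F_2^n}y$. Each coordinate of this sum equals $2^{n-1}$ modulo two, which is $0$ for $n\ge2$. So the hypothesis holds and the same bounds apply to every APN permutation.

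No step here is a real obstacle. The only care needed is the fact that affine functions have even weight, which uses $n\ge2$, and this is automatic since $n\ge5$.
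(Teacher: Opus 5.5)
Your proposal is correct and matches the paper's proof. In both, the weight of each $b\cdot F$ has parity $b\cdot\sum_xF(x)=0$ and affine functions have even weight, so $t=\NL(F)$ is even, and $t\ge3$, $t\ge5$ from Theorem~\ref{thm:small} improve to $t\ge4$, $t\ge6$. Your check that $\sum_xF(x)=\sum_y y=0$ for permutations supplies the one detail the paper leaves implicit.
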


\begin{proof}
For every $b$, the parity of $\wt(b\cdot F)$ is
$b\cdot\sum_xF(x)=0$. Adding an affine function preserves this
parity, since every affine Boolean function on $\F_2^n$ has even weight.
Thus $t=\NL(F)=(N-\Lin(F))/2$ is even.
Theorem~\ref{thm:small} gives $t\ge3$ in odd dimension and $t\ge5$
in even dimension. Hence $t\ge4$ and $t\ge6$, respectively,
which gives the claimed bounds.
\end{proof}

It is conjectured that $\sum_{x\in\F_2^n}F(x)=0$ for every APN function
on $\F_2^n$ with $n\ge3$; see~\cite{BudaghyanEtal18}.
By Corollary~\ref{cor:parity}, this would yield
$\Lin(F)\le N-8$ for odd $n\ge5$ and $\Lin(F)\le N-12$
for even $n\ge6$.

\section{Large Walsh coefficients}\label{sec:second}

Czerwinski and Pott~\cite[Sections~2 and~3]{CP26} relate Walsh
coefficients to intersections of APN graphs with affine subspaces.
This viewpoint is useful to bound sums of squared Walsh coefficients over
affine subspaces of their indices. Throughout this section,
$F\colon\F_2^n\to\F_2^n$ is APN.
Write $G=\F_2^n\times\F_2^n$ and $W(a,b)=W_{f_b}(a)$, including $b=0$,
so that $W(0)=N$.

\begin{lemma}\label{lem:affine-walsh}
Let $U\subseteq G$ be an affine subspace of dimension $r$. Then
\[
\sum_{u\in U\setminus\{0\}}W(u)^2\le
\begin{cases}
N^2+(2^r-2)N,&0\in U,\\
N^2+(2^r-1)N,&0\notin U.
\end{cases}
\]
\end{lemma}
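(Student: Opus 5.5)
We expand the sum of squared Walsh coefficients over the whole affine subspace and then remove the contribution of the zero index. Write $U=u_0+L$, where $L\le G$ is linear of dimension $r$. Expanding the square,
\[
W(u)^2=\sum_{z,z'\in\Gamma_F}(-1)^{u\cdot(z+z')},
\]
so summing over $u\in U$ gives
\[
\sum_{u\in U}W(u)^2=\sum_{z,z'\in\Gamma_F}(-1)^{u_0\cdot(z+z')}\sum_{\ell\in L}(-1)^{\ell\cdot(z+z')}
=2^r\sum_{\substack{z,z'\in\Gamma_F\\ z+z'\in L^\perp}}(-1)^{u_0\cdot(z+z')}.
\]
The diagonal pairs $z=z'$ contribute $2^rN$. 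Each unordered pair $\{z,z'\}$ of distinct graph points with $z+z'\in L^\perp$ contributes $\pm2\cdot2^r$, and it contributes positively only if $u_0\cdot(z+z')=0$, that is, only if $z+z'$ is orthogonal to all of $U$. Hence
\[
\sum_{u\in U}W(u)^2\le 2^rN+2^{r+1}P,
\]
where $P$ is the number of unordered distinct pairs whose sum lies in $U^\perp$, meaning it is orthogonal to every element of $U$.

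The key step is to bound $P$ using the Sidon property. Since $\Gamma_F$ is Sidon, distinct unordered pairs have distinct sums, so $P$ is at most the number of nonzero elements of $U^\perp$ that can arise as sums of two graph points.

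\emph{Case $0\in U$.} Then $U=L$ and $U^\perp=L^\perp$ has size $N^2/2^r$, so $P\le N^2/2^r-1$. Moreover the sum of two distinct graph points has first coordinate $x+x'\ne0$. We therefore want to count only elements of $L^\perp$ with nonzero first coordinate. Including $0$ in $L$ does not by itself guarantee that $L$ contains any $(0,b)$ with $b\ne0$, so a cleaner device is needed. The subtraction handles it: the left-hand side of the lemma removes only $W(0)^2=N^2$. Then
\[
\sum_{u\in U\setminus\{0\}}W(u)^2\le 2^rN+2^{r+1}\bigl(N^2/2^r-1\bigr)-N^2=N^2+2^rN-2^{r+1}.
\]
This is weaker than the target $N^2+(2^r-2)N$. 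Hence a better bound on $P$ is needed, namely $P\le (N^2-N)/2^r$, i.e.\ $2^rP\le N(N-1)$.

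That bound comes from the total count: there are exactly $N(N-1)/2$ unordered pairs in all. Averaging over the $2^r$ cosets would give the bound, but we need it specifically for the subspace $L^\perp$. I would prove $|\{\text{pair sums}\}\cap L^\perp|\le (N^2-N)/2^r$ by a double count over the cosets of $L^\perp$. Alternatively, note that the desired inequality
\[
2^{r+1}P\le N^2+(2^r-2)N+N^2-2^rN=2N^2-2N
\]
is exactly $2^rP\le N(N-1)$. This is the main obstacle.

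To overcome it, I would apply the Sidon property inside the quotient. The graph points project to $G/L^\perp$, which has size $2^r$. Pair sums lying in $L^\perp$ are exactly the pairs of distinct points landing in the same fiber of this projection. With fiber sizes $m_j$ satisfying $\sum_j m_j=N$, we have $P=\sum_j\binom{m_j}{2}$. Combining this with the Sidon bound $P\le|L^\perp|-1$ does not immediately yield $N(N-1)/2^r$. So I would instead apply the trivial bound $\sum_uW(u)^2\ge0$ on every coset $U'$ of $L$, together with Parseval over all of $G$, namely $\sum_{u\in G}W(u)^2=N^3$. Taking the exact identity above on each coset, the positive-sign pair counts sum to at most $N(N-1)/2$. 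Whether this suffices is where I expect difficulty.

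\emph{Case $0\notin U$.} Here nothing is subtracted from the sum, and the analogous estimate yields the target with $(2^r-1)N$ in place of $(2^r-2)N$.
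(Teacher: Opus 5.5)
Your setup is the paper's: expand $W(u)^2$ over pairs of graph points, observe that an unordered pair contributes positively only when $z+z'$ is orthogonal to all of $U$, and use the Sidon property to make the pair sums distinct. The gap is that you never prove the bound on $P$ that you yourself call the main obstacle, and none of your proposed workarounds supplies it. Averaging over cosets says nothing about the specific subspace $L^\perp$. The fiber identity $P=\sum_j\binom{m_j}{2}$ combined with $P\le|L^\perp|-1$ only reproduces your weak bound. Nonnegativity on cosets plus global Parseval gives identities or lower bounds, not an upper bound on $P$. The case $0\notin U$ is also only asserted to be ``analogous,'' although it needs its own observation.

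The missing step is a dimension count, and you were one sentence away from it. You correctly noted that pair sums have nonzero first coordinate. But you then asked whether $L$ contains elements $(0,b)$, whereas what matters is how many elements of the \emph{orthogonal} space have zero first coordinate.

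Let $V=\{0\}\times\F_2^n$ and $d=\dim\langle U\rangle$. Orthogonality to all of $U$ means orthogonality to $\langle U\rangle$, so $d=r$ if $0\in U$ and $d=r+1$ otherwise. Since $\dim\langle U\rangle^\perp=2n-d$, we get $\dim(\langle U\rangle^\perp\cap V)\ge(2n-d)+n-2n=n-d$, i.e.\ $|\langle U\rangle^\perp\cap V|\ge N/2^d$. The distinct pair sums lie in $\langle U\rangle^\perp\setminus V$, so $P\le(N^2-N)/2^d$ and
\[
\sum_{u\in U}W(u)^2\le 2^rN+2^{r+1-d}(N^2-N).
\]
For $d=r$, subtracting $W(0)^2=N^2$ gives $N^2+(2^r-2)N$. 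For $d=r+1$, nothing is subtracted and one gets $N^2+(2^r-1)N$. This completes both cases as in the paper.
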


\begin{proof}
Let $V=\{0\}\times\F_2^n$ 
 and $d=\dim\langle U\rangle$,
where $\langle U\rangle$ denotes the linear span of $U$.
For every $z\in G$, orthogonality of characters gives
\begin{equation}
\label{eq:character-U}
\sum_{u\in U}(-1)^{u\cdot z}\le2^r\ind_{\langle U\rangle^\perp}(z).
\end{equation}
Indeed, since $U$ is affine, this sum is $0$ or $\pm2^r$,
and it equals $2^r$ precisely when $z$ is orthogonal to every
element of $U$.

Expanding the squared Walsh coefficients gives
\begin{align*}
\sum_{u\in U}W(u)^2
&=\sum_{u\in U}\sum_{z,z'\in\Gamma_F}
    (-1)^{u\cdot(z+z')}\\
&=2^rN+2\sum_{\{z,z'\}\in\binom{\Gamma_F}{2}}
    \sum_{u\in U}(-1)^{u\cdot(z+z')}\\
&\le2^rN+2^{r+1}|\langle U\rangle^\perp \setminus V|.
\end{align*}
For the inequality, use~(\ref{eq:character-U}) and note
that the sums $z+z'$ are all distinct by the Sidon property of $\Gamma_F$ and
have nonzero first coordinate.
The dimension formula gives
\[
|\langle U\rangle^\perp|=\frac{N^2}{2^d},\qquad
|\langle U\rangle^\perp\cap V|\ge\frac N{2^d}.
\]
Consequently,
\[
\sum_{u\in U}W(u)^2
\le2^rN+2^{r+1-d}(N^2-N).
\]
If $0\in U$, then $d=r$, and subtracting $W(0)^2=N^2$ gives
the first bound. If $0\notin U$, then $d=r+1$, yielding the second bound.
\end{proof}

\begin{proof}[Proof of Theorem~\ref{thm:second}]
Choose $k$ distinct indices attaining $\Lin_1(F),\ldots,\Lin_k(F)$,
and let $U$ be their affine span. Its dimension $r$ is at most $k-1$.
All selected indices are nonzero, so Lemma~\ref{lem:affine-walsh} gives
\begin{equation}
\label{eq:walsh-partial-sums}
\sum_{i=1}^k\Lin_i(F)^2
\le\sum_{u\in U\setminus\{0\}}W(u)^2
\le N^2+(2^r-1)N
\le N^2+(2^{k-1}-1)N.
\end{equation}

Since $k\Lin_k(F)^2\le\sum_{i=1}^k\Lin_i(F)^2$, the claimed upper bound on $\Lin_k(F)$ follows.

To bound $\Lin_2(F)$, choose two distinct pairs $(a_i,b_i)$ with
$b_i\ne0$, and write $u_i=|W_{f_{b_i}}(a_i)|$ for $i=1,2$.
Let $m=\min\{u_1,u_2\}$, and suppose that $m>2N/3$.
The two indices are linearly independent. Applying the first case of
Lemma~\ref{lem:affine-walsh} to their span and writing
$w=W_{f_{b_1+b_2}}(a_1+a_2)$ gives
\begin{equation}
\label{eq:ineq-1}
u_1^2+u_2^2+w^2\le N^2+2N.
\end{equation}

Choose signs $\epsilon_i\in\{1,-1\}$ so that
$s_i(x)=\epsilon_i(-1)^{a_i\cdot x+f_{b_i}(x)}$ satisfies
$\sum_xs_i(x)=u_i$. Since $s_1s_2\ge s_1+s_2-1$ holds pointwise and
$\sum_xs_1(x)s_2(x)=\epsilon_1\epsilon_2w$, we have
\begin{equation}
\label{eq:ineq-2}
|w|\ge u_1+u_2-N\ge2m-N>0.
\end{equation}
Combining~(\ref{eq:ineq-1}) and~(\ref{eq:ineq-2}) yields
\[
2m^2+(2m-N)^2\le N^2+2N,
\qquad\text{hence}\qquad m(3m-2N)\le N.
\]
Every Walsh coefficient is even, so $m>2N/3$ implies that
$3m-2N$ is a positive even integer. The left-hand side is then at least
$2m$, a contradiction since $2m >N$. Thus $m\le2N/3$, and $m$ even gives
$m\le2\lfloor N/3\rfloor$. Taking $u_1$ and $u_2$ as the two largest absolute Walsh coefficients
proves the bound on $\Lin_2(F)$; the strict inequality follows from $3\nmid N$.

Finally,~\eqref{eq:walsh-partial-sums} gives
$4\Lin_4(F)^2\le N^2+7N$.
Both $\Lin_4(F)$ and $N/2$ are even, so $\Lin_4(F)>N/2$ would imply
\[
4\Lin_4(F)^2\ge4(N/2+2)^2=N^2+8N+16,
\]
a contradiction.
\end{proof}

\begin{example}
\label{ex:walsh-four-sharp}
The bound $\Lin_4(F)\le N/2$ in Theorem~\ref{thm:second}
is attained by some quadratic APN functions in small dimension. Indeed, suppose that $F$ is
quadratic and $\Lin(F)=N/2$. A component attaining this value is
plateaued, since every quadratic Boolean function is plateaued
\cite{CarletBook}. Its nonzero Walsh coefficients therefore all have
absolute value $N/2$, and Parseval's identity shows that there are
exactly
\[
\frac{N^2}{(N/2)^2}=4
\]
such coefficients. Consequently,
\[
\Lin_1(F)=\Lin_2(F)=\Lin_3(F)=\Lin_4(F)=N/2.
\]
For examples see Beierle and Leander~\cite{BeierleLeander22}, who
found quadratic APN functions on $\F_2^8$  with linearity $N/2$ by a computer search. No such APN function is currently known for $n>8$.
\end{example}

\section{Nonplateaued components}\label{sec:count}

The two parts of Theorem~\ref{thm:count} use fourth moments of character
sums in different ways. In odd dimension, the minimum fourth moment
of the Walsh coefficients of a plateaued component is larger than the
Parseval lower bound for the Walsh coefficients of a general Boolean
function. In even dimension, the trick is to restrict the graph to the
set where a component attaining the linearity differs from a closest
affine function, and to compare pairs of plateaued components.

Throughout this section, $F\colon \F_2^n\to \F_2^n$ is APN.
For a Boolean function $f$ write
\[
\kappa(f)=\frac{M_4(f)}{N^3}.
\]
By Lemma~\ref{lem:APNidentity}, we have
\begin{equation}\label{eq:kappa-total}
\sum_{b\ne0}\kappa(f_b)=2(N-1).
\end{equation}

The same normalization and identity are used by Gillot, Langevin and
Lo~\cite{GillotLangevinLo25} and by Gillot and
Langevin~\cite{GillotLangevin26} to constrain
the numbers of components with prescribed fourth Walsh moments,
particularly in their study of six-dimensional APN functions with
two distinct such moments. Carlet and Thornburgh~\cite[Remark~2.3]{CarletThornburgh26} also combine
the Sidon fourth-moment identity with Parseval's identity for
restrictions to affine subspaces on which the function is plateaued. 

The bounds below use the distinction between plateaued and
nonplateaued components, together with the Parseval sum outside a
chosen Walsh coefficient in odd dimension and fourth moments of
character sums over a restricted graph in even dimension.

\begin{lemma}\label{lem:kappa}
Every Boolean function satisfies $\kappa(f)\ge1$, with equality
if and only if it is bent. If $f$ is plateaued with
magnitude $\lambda$, then $\kappa(f)=\lambda^2/N$; in particular,
$\kappa(f)\ge2$ in odd dimension, and $\kappa(f)\ge4$ in even
dimension unless $f$ is bent.
For a plateaued nonzero component of $F$, we also have
$\kappa(f_b)\le N/4$.
\end{lemma}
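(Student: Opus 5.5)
The plan is to derive everything from Parseval's identity and one comparison between the second and fourth moments.

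\emph{Lower bound.} Parseval gives $\sum_a W_f(a)^2=N^2$. Cauchy--Schwarz over the $N$ indices gives
\[
N^4=\Bigl(\sum_aW_f(a)^2\Bigr)^2\le N\sum_aW_f(a)^4=N\,M_4(f),
\]
so $\kappa(f)\ge1$. Equality in Cauchy--Schwarz holds exactly when all $W_f(a)^2$ are equal, hence all equal to $N$. This is the definition of bent, which proves the first claim.

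\emph{Plateaued case.} Let $k$ be the number of nonzero Walsh coefficients, each of absolute value $\lambda$. Parseval gives $k\lambda^2=N^2$. Then
\[
M_4(f)=k\lambda^4=N^2\lambda^2,\qquad\text{so}\qquad \kappa(f)=\lambda^2/N.
\]
As in the proof of Lemma~\ref{lem:plateaued-magnitude}, $\lambda$ is a power of two, and $\lambda^2=N^2/k\ge N$ since $k\le N$.

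In odd dimension $\lambda^2\ne N$ because $N$ is not a square. Hence $\lambda^2\ge 2N$ (as $\lambda^2$ is an even power of two exceeding $N=2^n$ with $n$ odd gives $\lambda^2\ge 2^{n+1}$), and $\kappa\ge2$. In even dimension, $\lambda^2>N$ forces $\lambda^2\ge4N$, so $\kappa\ge4$ unless $\lambda^2=N$. The latter case means all $N$ coefficients are nonzero of magnitude $\sqrt N$, i.e.\ $f$ is bent.

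\emph{Upper bound for components.} A nonzero component of the APN map $F$ is nonaffine by Lemma~\ref{lem:bent}(3), assuming $n\ge3$, which holds throughout. Lemma~\ref{lem:plateaued-magnitude} then gives $\lambda\le N/2$, so $\kappa(f_b)=\lambda^2/N\le N/4$.

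No step is a real obstacle. The only point needing care is the integrality and power-of-two argument that rules out $\lambda^2=2N$ in even dimension and $\lambda^2=N$ in odd dimension.
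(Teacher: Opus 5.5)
Your proof is correct and follows the paper's argument essentially step by step. You use Cauchy--Schwarz with Parseval for $\kappa(f)\ge1$ and its equality case, then $M_4(f)=\lambda^2N^2$ for plateaued $f$ with $\lambda$ a power of two and $\lambda^2\ge N$, and finally nonaffineness of APN components (Lemma~\ref{lem:bent}) together with Lemma~\ref{lem:plateaued-magnitude} for $\kappa(f_b)\le N/4$; your explicit justification $\lambda^2=N^2/k\ge N$ and the parity-of-exponent remark spell out details the paper leaves implicit.
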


\begin{proof}
Applying Cauchy-Schwarz to the vectors $(W_f(a)^2)_a$ and $(1)_a$ and applying Parseval's identity give
\[
\sum_aW_f(a)^4\ge\frac1N\left(\sum_aW_f(a)^2\right)^2=N^3.
\]
Equality holds exactly when all $W_f(a)^2$ equal $N$, which is
the bent condition. For a plateaued function,
$W_f(a)^4=\lambda^2W_f(a)^2$ holds for every $a$, so
$M_4(f)=\lambda^2N^2$. Since the magnitude $\lambda$ is a
power of two and $\lambda^2\ge N$, we have $\lambda^2\ge2N$ when
$n$ is odd, and $\lambda^2\ge4N$ when $n$ is even and $f$ is not
bent. Finally, a nonzero component of $F$ is nonaffine
by Lemma~\ref{lem:bent}; hence $\lambda\le N/2$, yielding
$\kappa(f_b)\le N/4$.
\end{proof}

\subsection{Odd dimension}

Suppose $n$ is odd. Let $\mathcal Q$ index the nonplateaued components
and let $\mathcal T$ index the plateaued nonzero components, so
$|\mathcal Q|=q$ and $|\mathcal T|=N-1-q$.
Equation~\eqref{eq:kappa-total} gives
\begin{equation}\label{eq:odd-moment}
\sum_{b\in\mathcal Q}\bigl(\kappa(f_b)-1\bigr)
+\sum_{b\in\mathcal T}\bigl(\kappa(f_b)-2\bigr)=q.
\end{equation}
All terms $\kappa(f_b)-1$, resp., $\kappa(f_b)-2$ are nonnegative by Lemma~\ref{lem:kappa}.

\begin{proof}[Proof of Theorem~\ref{thm:count}, odd dimension]
Assume $q>0$. If $b\in\mathcal Q$, then~\eqref{eq:odd-moment}
gives $\kappa(f_b)-1\le q$.
Fix a Walsh coefficient $w=W_{f_b}(a_0)$. Parseval gives
$\sum_{a\ne a_0}W_{f_b}(a)^2=N^2-w^2$, and Cauchy-Schwarz yields
\[
M_4(f_b)\ge w^4+\frac{(N^2-w^2)^2}{N-1}.
\]
By rearranging terms,
\begin{equation*}
\left(\frac{w^2}{N}-1\right)^2
\le (N-1)\bigl(\kappa(f_b)-1\bigr)\le q(N-1),
\end{equation*}
and hence $w^2\le N(1+\sqrt{q(N-1)})$.

Now consider $b\in\mathcal T$. The nonzero Walsh
coefficients of $f_b$ have square $\lambda^2 = N \cdot \kappa(f_b)$. By Lemma~\ref{lem:kappa} and
\eqref{eq:odd-moment}, we have $2\le \kappa(f_b)\le N/4$ and $\kappa(f_b)-2\le q$.
Therefore
\[
\left(\frac{\lambda^2}{N}-1\right)^2 = (\kappa(f_b)-1)^2=\kappa(f_b)(\kappa(f_b)-2)+1\le\frac{Nq}{4}+1\le q(N-1),
\]
where the last inequality uses $N\ge3$ and $q\ge1$.
Thus the same bound holds for plateaued components. Taking the
maximum over all components and all $a$ proves~\eqref{eq:odd-count}.
\end{proof}

When $q=0$, equation~\eqref{eq:odd-moment} forces
$\kappa(f_b)=2$ for every $b\ne0$, recovering the classical
almost-bent conclusion~\cite[Corollary~2]{BergerEtal06}.
For $q>0$, dividing~\eqref{eq:odd-count} by $N^2$ gives
\[
\left(\frac{\Lin(F)}N\right)^2 - \frac1N
\le \sqrt{\frac qN\left(1-\frac1N\right)}.
\]
Since $q\leq N$, this proves the asymptotic statement
following Theorem~\ref{thm:count}.

\subsection{Even dimension}
For this case, choose $b_0\ne0$ and an affine function $\ell$ with
$d_H(f_{b_0},\ell)=t=\NL(F)$, and consider the function graph of $F$ over the subset 
$E=\{x:f_{b_0}(x)\ne\ell(x)\}$.
For $t<N/4$, pairing component indices as $\{b,b+b_0\}$ gives
strong lower bounds on the fourth moments of the restricted character sums whenever
both components are plateaued. Comparing these contributions with
the total contribution fixed by the Sidon identity~\eqref{eq:sidon-fourth}
limits the number of such pairs and yields a lower bound on $q$.

\begin{proposition}\label{prop:even-count}
Suppose $n$ is even and $\Lin(F)>N/2$. Let
$t=(N-\Lin(F))/2$. Then
\begin{equation}\label{eq:even-count}
q\ge\frac N2+1-t-\frac{t^2}{N}.
\end{equation}
\end{proposition}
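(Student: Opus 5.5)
The plan is to run the Sidon fourth-moment identity~\eqref{eq:sidon-fourth} on the part of the graph over the small set where a component attaining the linearity is wrong. Fix $b_0\ne0$ and an affine $\ell(x)=a_0\cdot x+c_0$ with $d_H(f_{b_0},\ell)=t$. Let $E=\{x:f_{b_0}(x)\ne\ell(x)\}$, so $|E|=t$, and $t<N/4$ because $\Lin(F)>N/2$. Put
\[
A=\{(x,F(x)):x\in E\}\subseteq G=\F_2^n\times\F_2^n,\qquad \alpha_{a,b}=\sum_{x\in E}(-1)^{a\cdot x+b\cdot F(x)}.
\]
Then $A$ is Sidon, so $\sum_{a,b}\alpha_{a,b}^4=N^2(3t^2-2t)$ and $\sum_{a,b}\alpha_{a,b}^2=N^2t$.

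The first step links $\alpha$ to the Walsh spectra of paired components. Since $(-1)^{f_{b+b_0}}=(-1)^{f_b}(-1)^{\ell}(-1)^{\ind_E}$ and $(-1)^{\ind_E}=1-2\ind_E$, we get
\[
(-1)^{c_0}W_{f_{b+b_0}}(a+a_0)=W_{f_b}(a)-2\alpha_{a,b}.
\]
Hence each $\alpha_{a,b}$ equals $\tfrac12\bigl(W_{f_b}(a)-\epsilon W_{f_{b+b_0}}(a+a_0)\bigr)$ with a fixed sign $\epsilon$. Partition the $N$ component indices into the $N/2$ classes $\{b,b+b_0\}$. Both $b$ and $b+b_0$ give the same index $b$ in the sum over $(a,b)$, and the full $(a,b)$ sum splits by these classes; I will handle this bookkeeping carefully. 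The class $\{0,b_0\}$ is computed explicitly: $\alpha_{a,0}=\sum_{x\in E}(-1)^{a\cdot x}$, with $\alpha_{0,0}=t$. I will subtract its second and fourth moments exactly, or bound them crudely by $t^2$ and $t^4$ times the appropriate counts.

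The second step treats a class $\{b,b+b_0\}$ with both components plateaued, with magnitudes $\lambda,\mu$. By Lemmas~\ref{lem:plateaued-magnitude} and~\ref{lem:bent}, these are powers of two with $\sqrt N\le\lambda,\mu\le N/2$. Since $|\alpha|\le t<N/4$, the value $\tfrac12(\pm\lambda\pm\mu)$ with $\lambda=\mu=N/2$ is excluded. Then every nonzero $\alpha_{a,b}$ has $\alpha_{a,b}^2\ge m$ for an explicit $m$ of order $N$. Different powers of two differ by at least the smaller one, so I expect $m\ge N/4$ and in most subcases $m\ge N$. This gives
\[
\sum_a\alpha_{a,b}^4\ge m\sum_a\alpha_{a,b}^2 .
\]
For the second moment, Parseval on the row $b$ gives $\sum_a\alpha_{a,b}^2=N\bigl|\{(x,y)\in E^2: b\cdot(F(x)+F(y))=0\}\bigr|$. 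Summed over all $b$, this totals $N^2t$, so each row carries about $Nt/\!\cdot$ on average, and its size is at least $Nt$ from the diagonal $x=y$. Rows with a nonplateaued component in their class get only the trivial bound $\alpha^4\ge\alpha^2$, or $\alpha^4\ge 0$. Each nonplateaued component spoils at most one class, so at least $N/2-1-q$ classes, and in fact more after accounting, are good.

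The third step compares these bounds with the total $N^2(3t^2-2t)$. Multiplying the number $p$ of good classes by the per-class lower bound $m\cdot Nt$ and inserting the exact contribution of $\{0,b_0\}$ yields an inequality of the shape
\[
p\cdot mNt\le N^2(3t^2-2t)-(\text{class }\{0,b_0\}).
\]
Solving it for $p\ge N/2-1-q$, or the sharper count, should give $q\ge N/2+1-t-t^2/N$. The main obstacle is obtaining exactly the constants in~\eqref{eq:even-count}. This requires the right value of $m$ case by case in $(\lambda,\mu)$, with the bent–bent case $\alpha\in\{0,\pm\sqrt N\}$ presumably being extremal. It may require using $\sum\alpha^4-m\sum\alpha^2$ globally, that is, an inequality $\alpha^4\ge m\alpha^2$ applied to all good rows and an exact second moment. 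It also requires handling the nonplateaued rows without losing a factor of two. If the crude per-class bound is too weak, I would first try replacing $\alpha^4\ge m\alpha^2$ by a quadratic polynomial $(\alpha^2-m_1)(\alpha^2-m_2)\ge0$ vanishing on the two smallest admissible values, in the spirit of Lemma~\ref{lem:arithmetic}.
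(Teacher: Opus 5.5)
Your setup matches the paper's. You use the Sidon set $A$ over $E$, the identity $\sum_{a,b}\alpha_{a,b}^4=N^2(3t^2-2t)$, the pairing $\{b,b+b_0\}$ with $\alpha_{a,b}=\tfrac12\bigl(W_{f_b}(a)-\epsilon W_{f_{b+b_0}}(a+a_0)\bigr)$, and a pointwise bound $\alpha^4\ge m\alpha^2$ on rows whose class is plateaued. The gap is at the step that fixes the constants.

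When the two magnitudes differ you only get $m\ge N/4$. Pointwise, nothing better is available there: for magnitudes $\sqrt N$ and $2\sqrt N$ the spectra allow $|\alpha_{a,b}|=\sqrt N/2$. With $m=N/4$ the final inequality yields only $p\le (N-t)(t-1+t^2/N)/(N/4-t)$, roughly $4t$, instead of $t-1+t^2/N$. The missing idea is that this case cannot occur when $t<N/4$. The paper proves this with the correlation identity $\sum_aW_{f_b}(a)W_{f_b+g}(a)=N\sum_x(-1)^{g(x)}=N(N-2t)$, where $g=\ind_E$. If the magnitudes are $\lambda<\mu$, then $\mu\ge2\lambda$, and the function of magnitude $\mu$ has $N^2/\mu^2$ nonzero coefficients. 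So the left side is at most $N^2\lambda/\mu\le N^2/2$, contradicting $t<N/4$. Equivalently, every $a$ where the magnitude-$\lambda$ spectrum is nonzero has $|\alpha_{a,b}|\ge\lambda/2$, so $\sum_a\alpha_{a,b}^2\ge N^2/4>Nt$. Once the magnitudes agree, the nonzero values are $\pm\lambda$ or $\pm\lambda/2$ with $\lambda\ge2\sqrt N$, or only $\pm\sqrt N$ in the bent case. Hence $m=N$ uniformly, and no case analysis or auxiliary polynomial is needed.

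Two further points must be fixed to get~\eqref{eq:even-count} exactly.

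First, your row formula is wrong. Summing $(-1)^{a\cdot(x+y)}$ over $a$ forces $x=y$, so $\sum_a\alpha_{a,b}^2=Nt$ exactly for every $b$. Cauchy--Schwarz then gives $\sum_a\alpha_{a,b}^4\ge Nt^2$. This, not $\alpha^4\ge\alpha^2$ or $\alpha^4\ge0$, is the bound needed on classes containing a nonplateaued component; the weaker bounds lose the constant.

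Second, the count is $p\ge N/2-q$, not $N/2-1-q$. The component $f_{b_0}$ is itself nonplateaued: $|W_{f_{b_0}}(a_0)|=\Lin(F)>N/2$ while $f_{b_0}$ is nonaffine (Lemmas~\ref{lem:plateaued-magnitude} and~\ref{lem:bent}). So the class $\{0,b_0\}$ is charged to one of the $q$ components.

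Since $\sum_a\alpha_{a,b+b_0}^4=\sum_a\alpha_{a,b}^4$, you can sum one row per class, using $\alpha_{0,0}=t$ for the class $\{0,b_0\}$. This gives
\[
\tfrac{N^2}{2}(3t^2-2t)\ge pN^2t+\bigl(\tfrac N2-p-1\bigr)Nt^2+t^4 .
\]
This rearranges to $p\le t-1+t^2/N$, which yields~\eqref{eq:even-count}.
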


\begin{proof}
Choose $b_0\ne0$ and an affine function $\ell(x)=a_0\cdot x+c$
with $d_H(f_{b_0},\ell)=t$, and write $g=f_{b_0}+\ell=\ind_E$.
Here $t>0$ by Lemma~\ref{lem:bent}, and $t<N/4$ by the hypothesis of the statement.
The component $f_{b_0}$ is nonplateaued by
Lemma~\ref{lem:plateaued-magnitude}.
For $a,b\in \F_2^n$, let
\[
T_b(a)=\sum_{x\in E}(-1)^{f_b(x)+a\cdot x}
      =\frac{W_{f_b}(a)-W_{f_b+g}(a)}2,
\qquad S_b=\sum_aT_b(a)^4.
\]
The function $u_b(x)=\ind_E(x)(-1)^{f_b(x)}$ has Fourier transform
$T_b$ and satisfies $u_b(x)^2=\ind_E(x)$. Parseval's identity gives
\[
\sum_aT_b(a)^2=N\sum_xu_b(x)^2=N|E|=Nt.
\]
Cauchy-Schwarz therefore gives, for every $b$,
\begin{equation}\label{eq:restricted-baseline}
S_b\ge\frac1N\left(\sum_aT_b(a)^2\right)^2=Nt^2.
\end{equation}
The set $A=\{(x,F(x)):x\in E\}\subseteq\Gamma_F$ is Sidon because
$\Gamma_F$ is Sidon. The character sum $\alpha_{\chi_{a,b}}$ for $(a,b) \in G = \F_2^n \times \F_2^n$ is $T_b(a)$,
so~\eqref{eq:sidon-fourth} yields
\begin{equation}\label{eq:restricted-fourth}
\sum_bS_b=N^2(3t^2-2t).
\end{equation}

Partition $\F_2^n$ into the $N/2$ pairs $\{b,b+b_0\}$.
Since $f_{b_0}=\ell+1$ on $E$, we have, for every $b$,
\[
T_{b+b_0}(a)=-(-1)^cT_b(a+a_0),\qquad S_{b+b_0}=S_b.
\]
Let $p$ count the pairs $\{b,b+b_0\}$ for which $f_b$ and $f_{b+b_0}$ are both plateaued and nonzero.
Every other pair contains an index of a nonplateaued component, including the pair
$\{0,b_0\}$, since $f_{b_0}$ is nonplateaued. The pairs are disjoint, so
\begin{equation}\label{eq:paired-count}
p\ge\frac N2-q.
\end{equation}
For a pair $\{b,b+b_0\}$ counted by $p$, both $f_b$ and
$f_b+g=f_{b+b_0}+\ell$ are plateaued,
since affine addition preserves plateauedness. Moreover, both $f_b$ and $f_b+g$, and therefore both $f_b$ and $f_{b+b_0}$, must have the same
nonzero Walsh magnitude. Otherwise, write the magnitudes as
$\lambda<\lambda'$, so $\lambda'\ge2\lambda$.
The function of magnitude $\lambda'$ has $N^2/(\lambda')^2$
nonzero Walsh coefficients, and Parseval's identity gives
\[
N(N-2t)=N \sum_x (-1)^{f_b(x)}(-1)^{f_b(x)+g(x)} = \sum_aW_{f_b}(a)W_{f_b+g}(a)
\le N^2\frac{\lambda}{\lambda'}\le\frac{N^2}{2},
\]
contrary to $t<N/4$.

Write $\lambda$ for the common magnitude of $f_b$ and $f_{b+b_0}$. Every nonzero $T_b(a)$
has magnitude $\lambda/2$ or $\lambda$. Such a comparison of the
Walsh spectra of two plateaued functions also occurs in Potapov's
distance argument~\cite[proof of Theorem~2(2)]{Potapov20}.
If $\lambda=\sqrt N$, both functions are bent and only the
magnitude $\lambda$ occurs. Otherwise $\lambda\ge2\sqrt N$, since $\lambda$ is a power of two. Consequently, every nonzero $T_b(a)$ has magnitude at least $\sqrt{N}$, yielding the pointwise inequality $T_b(a)^4 \geq N T_b(a)^2$ for every $a$. For every pair counted by $p$, we therefore have
\begin{equation}\label{eq:paired-fourth}
S_b\ge N\sum_aT_b(a)^2=N^2t.
\end{equation}

The pair $\{0,b_0\}$ is not counted by $p$. Since $T_0(0)=t$, we have
\begin{equation}\label{eq:exceptional-fourth}
S_0\ge t^4.
\end{equation}
Sum one value $S_b$ from each pair, using~\eqref{eq:paired-fourth}
for the $p$ plateaued pairs, \eqref{eq:exceptional-fourth} for
$\{0,b_0\}$, and~\eqref{eq:restricted-baseline} for the remaining
$N/2-p-1$ pairs. Equation~\eqref{eq:restricted-fourth} gives
\[
\frac{N^2}{2}(3t^2-2t)
\ge pN^2t+\left(\frac N2-p-1\right)Nt^2+t^4.
\]
Rearranging yields
\begin{align*}
pNt(N-t)&\le N^2t(t-1)+Nt^2-t^4\\
        &=Nt(N-t)\left(t-1+\frac{t^2}{N}\right).
\end{align*}
Dividing by $Nt(N-t)>0$ and using~\eqref{eq:paired-count}
proves~\eqref{eq:even-count}.
\end{proof}

\begin{proof}[Proof of Theorem~\ref{thm:count}, even dimension]
Fix $1/2\le C<1$ and suppose that $\Lin(F)>CN$. Then
\[
0<t=\frac{N-\Lin(F)}2<\frac{(1-C)N}{2}.
\]
Since $\Lin(F)>N/2$, Proposition~\ref{prop:even-count} gives
\begin{align*}
q&\ge\frac N2+1-t-\frac{t^2}{N}\\
 &>\frac N2+1-\frac{(1-C)N}{2}-\frac{(1-C)^2N}{4}\\
 &=\frac{4C-C^2-1}{4}\,N+1.
\end{align*}
This proves the implication.
\end{proof}

For the stated special case in~(\ref{eq:proportion-special}), if $\Lin(F)>N/2$, then $t$ is an
integer with $t\le N/4-1$. The expression $t+t^2/N$ is increasing
for $t\ge1$. Proposition~\ref{prop:even-count} therefore gives
\begin{equation*}
q\ge\frac N2+1-\left(\frac N4-1\right)
                 -\frac{(N/4-1)^2}{N}
 =\frac{3N}{16}+\frac52-\frac1N
 >\frac{3N}{16}+2,
\end{equation*}
where the last inequality uses $N\ge3$.

\begin{remark}
An APN function cannot have exactly one nonplateaued component.
For a Boolean function $f$, write
\begin{equation*}
Q_f(x)=\sum_{u,v\in \F_2^n}(-1)^{f(x)+f(x+u)+f(x+v)+f(x+u+v)}.
\end{equation*}
The classical plateaued characterization says that $Q_f$ is constant
if and only if $f$ is plateaued
\cite[Theorem~1]{CarletProuff03}. For an APN function $F$, character
orthogonality gives
\begin{equation}\label{eq:plateaued}
\sum_{b\ne0}Q_{f_b}(x)=N(3N-2)-N^2=2N(N-1).
\end{equation}
For each fixed $x$, the second derivatives $D_uD_vf_b(x)$
vanish simultaneously for all $b\ne0$ precisely when $u=0$,
$v=0$, or $u=v$, giving $3N-2$ ordered pairs $(u,v)$.
Indeed, for $u\ne0$, simultaneous vanishing is equivalent to
$D_uF(x)=D_uF(x+v)$, and the APN property forces
$v\in\{0,u\}$.

If $f_{b_0}$ were the only nonplateaued nonzero component,
then $Q_{f_b}$ would be constant for every $b \notin \{0,b_0\}$.
Equation~(\ref{eq:plateaued}) would therefore force
\[
Q_{f_{b_0}}(x)
=2N(N-1)-\sum_{\substack{b\ne0\\b\ne b_0}}Q_{f_b}(x)
\]
to be independent of $x$ as well. By the plateaued
characterization, $f_{b_0}$ would then also be plateaued,
a contradiction.
\end{remark}

\section*{Use of Generative AI}
ChatGPT Pro (GPT-6 Astra and Codex) was used in the research and preparation of this article. Separate AI-based investigations of three open problems concerning APN functions were combined, and the most promising results were selected for further development. AI contributions included proposing and strengthening mathematical statements, developing and simplifying proofs, searching the literature and examining attribution, and drafting and iteratively revising a preliminary version of the manuscript. The author directed the investigation, selected the results, their formulation, and scope of the article, and carefully reviewed and revised the final manuscript to ensure correctness of results, proper attribution, and a clear exposition. The author takes full responsibility for its content.


\begin{thebibliography}{99}

\bibitem{BabaiSos85}
L.~Babai and V.~T.~S\'os,
Sidon sets in groups and induced subgraphs of Cayley graphs,
\emph{Eur. J. Comb.} \textbf{6} (1985), no.~2, 101-114.
\href{https://doi.org/10.1016/S0195-6698(85)80001-9}{doi:10.1016/S0195-6698(85)80001-9}.

\bibitem{BeierleLeander22}
C.~Beierle and G.~Leander,
New instances of quadratic APN functions,
\emph{IEEE Trans. Inf. Theory} \textbf{68} (2022), no.~1,
670-678.
\href{https://doi.org/10.1109/TIT.2021.3120698}{doi:10.1109/TIT.2021.3120698}.

\bibitem{BeneteauEtal26}
S.~H.~B\'en\'eteau, N.~Goluboff, L.~K\"olsch and D.~Vaghasiya,
On the Walsh spectra of quadratic APN functions,
\emph{IEEE Trans. Inf. Theory} \textbf{72} (2026), no.~7,
5207--5216.
\href{https://doi.org/10.1109/TIT.2026.3695003}{doi:10.1109/TIT.2026.3695003}.

\bibitem{BergerEtal06}
T.~P.~Berger, A.~Canteaut, P.~Charpin and Y.~Laigle-Chapuy,
On almost perfect nonlinear functions over $\F_2^n$,
\emph{IEEE Trans. Inf. Theory} \textbf{52} (2006), no.~9,
4160-4170.
\href{https://doi.org/10.1109/TIT.2006.880036}{doi:10.1109/TIT.2006.880036}.

\bibitem{BrinkmannLeander08}
M.~Brinkmann and G.~Leander,
On the classification of APN functions up to dimension five,
\emph{Des. Codes Cryptogr.} \textbf{49} (2008), 273-288.
\href{https://doi.org/10.1007/s10623-008-9194-6}{doi:10.1007/s10623-008-9194-6}.

\bibitem{BrouwerTolhuizen93}
A.~E.~Brouwer and L.~M.~G.~M.~Tolhuizen,
A sharpening of the Johnson bound for binary linear codes and the
nonexistence of linear codes with Preparata parameters,
\emph{Des. Codes Cryptogr.} \textbf{3} (1993), 95-98.
\href{https://doi.org/10.1007/BF01388407}{doi:10.1007/BF01388407}.

\bibitem{BudaghyanEtal18}
L.~Budaghyan, C.~Carlet, T.~Helleseth, N.~Li and B.~Sun,
On upper bounds for algebraic degrees of APN functions,
\emph{IEEE Trans. Inf. Theory} \textbf{64} (2018), no.~6,
4399-4411.
\href{https://doi.org/10.1109/TIT.2017.2757938}{doi:10.1109/TIT.2017.2757938}.

\bibitem{Canteaut01}
A.~Canteaut,
Cryptographic functions and design criteria for block ciphers,
in \emph{Progress in Cryptology---INDOCRYPT 2001},
C.~P. Rangan and C.~Ding (eds.),
Lecture Notes in Computer Science, vol.~2247,
Springer, Berlin, Heidelberg, 2001, pp.~1-16.
\href{https://doi.org/10.1007/3-540-45311-3_1}
{doi:10.1007/3-540-45311-3\_1}.

\bibitem{CCD99}
A.~Canteaut, P.~Charpin, and H.~Dobbertin,
A new characterization of almost bent functions,
in \emph{Fast Software Encryption}, L.~Knudsen (ed.),
Lecture Notes in Computer Science, vol.~1636,
Springer, Berlin, Heidelberg, 1999, pp.~186-200.
\href{https://doi.org/10.1007/3-540-48519-8_14}
     {doi:10.1007/3-540-48519-8\_14}.

\bibitem{Carlet10}
C.~Carlet,
Vectorial Boolean functions for cryptography,
in \emph{Boolean Models and Methods in Mathematics, Computer Science,
and Engineering}, Y.~Crama and P.~L.~Hammer (eds.),
Cambridge University Press, Cambridge, 2010, pp.~398--470.
\href{https://doi.org/10.1017/CBO9780511780448.012}{doi:10.1017/CBO9780511780448.012}.

\bibitem{Carlet15}
C.~Carlet,
Boolean and vectorial plateaued functions and APN functions,
\emph{IEEE Trans. Inf. Theory} \textbf{61} (2015), no.~11,
6272-6289.
\href{https://doi.org/10.1109/TIT.2015.2481384}{doi:10.1109/TIT.2015.2481384}.

\bibitem{Carlet18}
C.~Carlet,
Characterizations of the differential uniformity of vectorial
functions by the Walsh transform,
\emph{IEEE Trans. Inf. Theory} \textbf{64} (2018), no.~9,
6443--6453.
\href{https://doi.org/10.1109/TIT.2017.2761392}
{doi:10.1109/TIT.2017.2761392}.

\bibitem{CarletBook}
C.~Carlet,
\emph{Boolean Functions for Cryptography and Coding Theory},
Cambridge University Press, Cambridge, 2021.
\href{https://doi.org/10.1017/9781108606806}{doi:10.1017/9781108606806}.

\bibitem{Carlet21}
C.~Carlet,
On the properties of the Boolean functions associated to the
differential spectrum of general APN functions and their consequences,
\emph{IEEE Trans. Inf. Theory} \textbf{67} (2021), no.~10,
6926-6939.
\href{https://doi.org/10.1109/TIT.2021.3081139}{doi:10.1109/TIT.2021.3081139}.

\bibitem{CCZ98}
C.~Carlet, P.~Charpin and V.~Zinoviev,
Codes, bent functions and permutations suitable for DES-like cryptosystems,
\emph{Des. Codes Cryptogr.} \textbf{15} (1998), 125-156.
\href{https://doi.org/10.1023/A:1008344232130}{doi:10.1023/A:1008344232130}.

\bibitem{CarletPicek23}
C.~Carlet and S.~Picek,
On the exponents of APN power functions and Sidon sets, sum-free sets,
and Dickson polynomials,
\emph{Adv. Math. Commun.} \textbf{17} (2023), no.~6, 1507-1525.
\href{https://doi.org/10.3934/amc.2021064}{doi:10.3934/amc.2021064}.

\bibitem{CarletProuff03}
C.~Carlet and E.~Prouff,
On plateaued functions and their constructions,
in \emph{Fast Software Encryption---FSE 2003},
T.~Johansson (ed.),
Lecture Notes in Computer Science, vol.~2887,
Springer, Berlin, Heidelberg, 2003, pp.~54--73.
\href{https://doi.org/10.1007/978-3-540-39887-5_6}
{doi:10.1007/978-3-540-39887-5\_6}.

\bibitem{CarletThornburgh26}
C.~Carlet and D.~Thornburgh,
Resolving a conjecture on quadratic APN functions and a new quadratic
$(n,n)$-function associated to crooked functions,
\href{https://arxiv.org/abs/2608.23888v2}{arXiv:2608.23888v2},
revised 7 September 2026; first posted 24 August 2026.

\bibitem{CV94}
F.~Chabaud and S.~Vaudenay,
Links between differential and linear cryptanalysis,
in \emph{Advances in Cryptology---EUROCRYPT '94},
A.~De Santis (ed.),
Lecture Notes in Computer Science, vol.~950,
Springer, Berlin, Heidelberg, 1995, pp.~356--365.
\href{https://doi.org/10.1007/BFb0053450}
{doi:10.1007/BFb0053450}.

\bibitem{CP26}
I.~Czerwinski and A.~Pott,
On large Sidon sets,
\emph{J. Comb. Theory, Ser. A} \textbf{220} (2026), article~106129.
\href{https://doi.org/10.1016/j.jcta.2025.106129}{doi:10.1016/j.jcta.2025.106129}.

\bibitem{GillotLangevin26}
V.~Gillot and P.~Langevin,
On known APNs,
\href{https://arxiv.org/abs/2601.11247}{arXiv:2601.11247}, 2026.

\bibitem{GillotLangevinLo25}
V.~Gillot, P.~Langevin and A.~Lo,
Spectral moment of order four and the uniqueness of the CCZ class of Dublin APN permutation,
\href{https://arxiv.org/abs/2507.12853}{arXiv:2507.12853}, 2025.

\bibitem{GologluEtAl21}
F.~G\"olo\u{g}lu, L.~K\"olsch, G.~Kyureghyan and L.~Perrin,
On subspaces of Kloosterman zeros and permutations of the form
$L_1(x^{-1})+L_2(x)$,
in \emph{Arithmetic of Finite Fields (WAIFI 2020)},
J. C.~Bajard and A.~Topuzo\u{g}lu (eds.),
Lecture Notes in Computer Science, vol.~12542,
Springer, Cham, 2021, 207--221.
\href{https://doi.org/10.1007/978-3-030-68869-1_12}
{doi:10.1007/978-3-030-68869-1\_12}.

\bibitem{KolschPolujan24}
L.~K\"olsch and A.~Polujan,
Value distributions of perfect nonlinear functions,
\emph{Combinatorica} \textbf{44} (2024), 231--268.
\href{https://doi.org/10.1007/s00493-023-00067-y}
{doi:10.1007/s00493-023-00067-y}.

\bibitem{KolschPolujan26}
L.~K\"olsch and A.~Polujan,
The combinatorial structure and value distributions of plateaued functions,
\emph{J. Cryptol.} \textbf{39} (2026), article~21.
\href{https://doi.org/10.1007/s00145-026-09578-5}{doi:10.1007/s00145-026-09578-5}.

\bibitem{Nyberg91}
K.~Nyberg,
Perfect nonlinear S-boxes,
in \emph{Advances in Cryptology---EUROCRYPT '91},
D.~W.~Davies (ed.),
Lecture Notes in Computer Science, vol.~547,
Springer, Berlin, Heidelberg, 1991, pp.~378--386.
\href{https://doi.org/10.1007/3-540-46416-6_32}
{doi:10.1007/3-540-46416-6\_32}.

\bibitem{NybergKnudsen93}
\enlargethispage{2\baselineskip}
K.~Nyberg and L.~R.~Knudsen,
Provable security against differential cryptanalysis,
in \emph{Advances in Cryptology---CRYPTO '92},
E.~F.~Brickell (ed.),
Lecture Notes in Computer Science, vol.~740,
Springer, Berlin, Heidelberg, 1993, pp.~566--574.
\href{https://doi.org/10.1007/3-540-48071-4_41}
{doi:10.1007/3-540-48071-4\_41}.

\bibitem{Potapov20}
V.~N.~Potapov,
On $q$-ary bent and plateaued functions,
\emph{Des. Codes Cryptogr.} \textbf{88} (2020), 2037-2049.
\href{https://doi.org/10.1007/s10623-020-00761-8}{doi:10.1007/s10623-020-00761-8}.

\bibitem{Rothaus76}
O.~S.~Rothaus,
On ``bent'' functions,
\emph{J. Comb. Theory, Ser. A} \textbf{20} (1976), no.~3, 300-305.
\href{https://doi.org/10.1016/0097-3165(76)90024-8}{doi:10.1016/0097-3165(76)90024-8}.

\bibitem{TaoVu06}
T.~Tao and V.~H.~Vu,
\emph{Additive Combinatorics},
Cambridge University Press, Cambridge, 2006.
\href{https://doi.org/10.1017/CBO9780511755149}{doi:10.1017/CBO9780511755149}.

\bibitem{Thornburgh26}
D.~Thornburgh,
A new upper bound for Sidon sets in $\F_2^{4k+3}$,
\href{https://arxiv.org/abs/2609.27731}{arXiv:2609.27731}, 2026.

\bibitem{ZhengZhang99}
Y.~Zheng and X.-M.~Zhang,
On plateaued functions,
\emph{IEEE Trans. Inf. Theory} \textbf{47} (2001), no.~3,
1215--1223.
\href{https://doi.org/10.1109/18.915690}
{doi:10.1109/18.915690}.

\end{thebibliography}
\end{document}